\newtheorem{thm}{Theorem}[section]
\newtheorem{lem}[thm]{Lemma}
\newtheorem{lem-dfn}[thm]{Lemma-Definition}
\newtheorem{prop}[thm]{Proposition}
\theoremstyle{definition}
\newtheorem{defn}[thm]{Definition}
\newtheorem{prob}[thm]{Problem}
\newtheorem*{acknowledgement}{Acknowledgement}
\theoremstyle{remark}
\newtheorem{claim}[thm]{Claim}
\newtheorem{rem}[thm]{Remark}
\begin{document}

\subjclass[2020]{13C40, 13E10}

\keywords{Artinian complete intersection ring, binomial, Macaulay dual generator, strong Lefschetz property}

\title[A characterization of binomial Macaulay dual generators for c.i]{A characterization of binomial Macaulay dual generators for  complete intersections}

\author{Kohsuke Shibata}
\address{Department of Mathematics, School of Engineering, 
Tokyo Denki University, Adachi-ku, Tokyo 120-8551, Japan.}
\email{shibata.kohsuke@mail.dendai.ac.jp}

\begin{abstract}
We characterize a binomial such that the Artinian algebra whose Macaulay dual generator is the binomial is a complete intersection.
As an application, we prove that the Artinian algebra with a binomial Macaulay dual generator has the strong Lefschetz property in characteristic 0 if the Artinian algebra is a complete intersection.
\end{abstract}

\maketitle

\section{Introduction}
Throughout this paper, $k$ is a field.
Let $R=k[x_1,\dots,x_N]$ and $S=k[X_1,\dots,X_N]$ be polynomial rings.
Let us consider the action $\circ$ of $R$ on $S$ given by
\[
x_1^{a_1}\cdots x_N^{a_N}\circ X_1^{b_1}\cdots X_N^{b_N}=
  \begin{cases}
    X_1^{b_1-a_1}\cdots X_N^{b_N-a_N} & \text{if $b_i\ge a_i$ for any $i$,} \\
    0                 & \text{othewise}
  \end{cases},
\]
for any $a_1,\dots,a_N,b_1,\dots, b_N\in \mathbb Z_{\ge 0}$.
Then $S$ is an $R$-module via the action $\circ$.
For $F\in S\setminus\{0\}$, we define the $(x_1,\dots, x_N)$-primary ideal $\operatorname{Ann}_R(F)$ by
\[
\operatorname{Ann}_R(F)=\{f\in R \mid f\circ F=0\}.
\]
Then $R/\operatorname{Ann}_R(F)$ is an Artinian Gorenstein ring for any $F\in S\setminus\{0\}$.
Moreover, if $R/I$ is an Artinian Gorenstein ring with $\sqrt{I}=(x_1,\dots, x_N)$, then there exists $F\in S$ such that $I=\operatorname{Ann}_R(F)$.
Since any complete intersection ring is Gorenstein, it is natural to consider the following problem.

\begin{prob}[{\cite[Chapter 9.L]{IK}}]
Characterize  $F\in S$ such that $R/\operatorname{Ann}_R(F)$ is a complete intersection.
\end{prob}

Any Gorenstein local ring of embedding dimension at most two is a complete intersection.
Hence if $N=2$, $R/\operatorname{Ann}_R(F)$ is a complete intersection for any $F\in S\setminus\{0\}$.
In \cite{HWW}, Harima, Wachi and Watanabe give  necessary and sufficient conditions for $R/\operatorname{Ann}_R(F)$ to be a complete intersection when $F$ is a homogeneous of degree $N$.
In \cite{E}, Elias characterize that $R/\operatorname{Ann}_R(F)$ is a complete intersection for a homogeneous $F$ using a regular sequence on $R$.
If $F$ is a monomial, then $R/\operatorname{Ann}_R(F)$ is a complete intersection.
In this paper, we consider this problem for binomials, which is the next simplest case after monomials.

\begin{prob}[{\cite[Section 4]{ADMMSV}, \cite[Problem 3.6]{HWW}}]\label{problem binomial}
Characterize a binomial $F\in S$ such that $R/\operatorname{Ann}_R(F)$ is a complete intersection.
\end{prob}

In \cite{ADFMMSV}, Altafi, Dinu, Faridi, Masuti, Mir\'{o}-Roig,  Seceleanu, and Villamizar solve this problem when $N=3$,  $F$ is homogeneous and $k$ is an algebraically closed field.
In this paper, we solve Problem \ref{problem binomial}.
Moreover, we determine $\operatorname{Ann}_R(F)$ for a binomial $F$ if $R/\operatorname{Ann}_R(F)$ is a complete intersection.

\begin{thm}\label{thm in intro}
Let $m,n\in \mathbb N$.
Let $R=k[x_1,\dots,x_{m+n}]$ and $S=k[X_1,\dots,X_{m+n}]$ be  polynomial rings.
Let \[F=X_1^{a_1}\cdots X_{m+n}^{a_{m+n}}(c_1X_1^{b_1}\cdots X_m^{b_m}-c_2X_{m+1}^{b_{m+1}}\cdots X_{m+n}^{b_{m+n}} )\in S\] 
be a binomial, where $a_1,\dots,a_{m+n},b_1\dots,b_{m+n}\in \mathbb Z_{\ge 0}$ and $c_1,c_2\in k\setminus \{0\}$.
Let 
\[d_1=\#\{i \mid b_i\neq 0,\ i=1,\dots, m\},\quad d_2=\#\{i \mid b_i\neq 0,\ i=m+1,\dots, m+n\}.\] 
Suppose that $d_1\ge d_2\ge 1$, \[b_{d_1+1}=b_{d_1+2}=\cdots=b_{m}=0,\quad b_{m+d_2+1}=b_{m+d_2+2}=\cdots=b_{m+n}=0.\]
Let \[v=\min\{i\in \mathbb N \mid (x_1^{b_1}\cdots x_m^{b_m})^i\circ (X_1^{a_1}\cdots X_m^{a_m})=0\}.\]
Then 
\begin{enumerate}
\item $R/\operatorname{Ann}_R(F)$ is a complete intersection if and only if one of the following conditions holds:

\begin{enumerate}
\item $d_1=d_2=1$.
\item $d_1\ge 2$, $d_2=1$ and $a_{m+1}+1\ge vb_{m+1}$.
\end{enumerate}

\item Suppose that $d_1=d_2=1$. Let $w=\min\{i\in \mathbb N \mid a_{m+1}+1\le ib_{m+1}\}$ and $I=(x_2^{a_2+1},\cdots, x_m^{a_m+1},x_{m+2}^{a_{m+2}+1},\cdots, x_{m+n}^{a_{m+n}+1})$.
\begin{enumerate}
\item If $v<w$, then 
\[\operatorname{Ann}_R(F)=(x_1^{a_1+b_1+1},\sum_{i=0}^{v}c_1^{v-i}c_2^ix_1^{ib_1}x_{m+1}^{a_{m+1}+1-ib_{m+1}})+I.\]

\item If $v>w$, then 
\[\operatorname{Ann}_R(F)=(x_{m+1}^{a_{m+1}+b_{m+1}+1},\sum_{i=0}^{w}c_1^{i}c_2^{w-i}x_1^{a_1+1-ib_1}x_{m+1}^{ib_{m+1}})+I.\]

\item If $v=w$, then $\operatorname{Ann}_R(F)=(p,q)+I,$
where \[p=\sum_{i=0}^{v}c_1^{v-i}c_2^ix_1^{ib_1}x_{m+1}^{(v-i)b_{m+1}},\quad q=\sum_{i=0}^{v-1}c_1^{v-1-i}c_2^ix_1^{a_1+1-(v-1-i)b_1}x_{m+1}^{a_{m+1}+1-ib_{m+1}}.\]
\end{enumerate}

\item Suppose that $d_2=1$ and $a_{m+1}+1\ge vb_{m+1}$. 
Then
\[
\operatorname{Ann}_R(F)=(x_1^{a_1+b_1+1}, \dots, x_m^{a_m+b_m+1}, p,x_{m+2}^{a_{m+2}+1},\cdots, x_{m+n}^{a_{m+n}+1}),
\]
where $p=\sum_{i=0}^{v}c_1^{v-i}c_2^i(x_1^{b_1}\cdots x_m^{b_m})^ix_{m+1}^{a_{m+1}+1-ib_{m+1}}$.
\end{enumerate}

\end{thm}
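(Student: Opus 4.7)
The plan is to establish parts (2) and (3) by exhibiting explicit generators of $\operatorname{Ann}_R(F)$, and then to deduce (1) by a generator count.

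First I set up notation. Let $M_1=X_1^{b_1}\cdots X_m^{b_m}$, $M_2=X_{m+1}^{b_{m+1}}\cdots X_{m+n}^{b_{m+n}}$, and $P=X_1^{a_1}\cdots X_{m+n}^{a_{m+n}}$, so that $F=c_1PM_1-c_2PM_2$. For each index $i$ with $b_i=0$, that is $i\in\{d_1+1,\dots,m\}\cup\{m+d_2+1,\dots,m+n\}$, the variable $x_i$ appears only in $P$, so $x_i^{a_i+1}\in\operatorname{Ann}_R(F)$. These pure powers form part of every candidate ideal in the theorem and effectively reduce the problem to the ``active'' variables.

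The heart of the construction is a telescoping identity. Applied to $F$, the generator $q=\sum_{i=0}^{v}c_1^{v-i}c_2^{i}x_1^{ib_1}x_{m+1}^{a_{m+1}+1-ib_{m+1}}$ of case (2)(a) produces two sums, one from $c_1PM_1$ and one from $c_2PM_2$; after a single index shift $i\mapsto i+1$, the $i$-th term of the first matches the $i$-th term of the second with opposite sign, so they cancel. The summation range is forced by the definition of $v$ (which makes the $i=v$ contribution to $c_2PM_2$ vanish since $vb_1>a_1$) and by the hypothesis $v<w$ (which keeps the exponent $a_{m+1}+1-ib_{m+1}$ non-negative throughout). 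The identical mechanism, with the roles of the two factors interchanged or with $(x_1^{b_1}\cdots x_m^{b_m})^{i}$ in place of $x_1^{ib_1}$, handles (2)(b), (2)(c), and the generator $p$ of (3); in (3), the arithmetic hypothesis $a_{m+1}+1\ge vb_{m+1}$ is exactly what keeps the exponents of $x_{m+1}$ non-negative. This proves $J\subseteq\operatorname{Ann}_R(F)$ for every candidate ideal $J$ in the theorem.

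To upgrade these inclusions to equalities I would compare $k$-dimensions. On one hand, $\dim_k R/\operatorname{Ann}_R(F)=\dim_k(R\circ F)$, and the latter is computed by enumerating the derivatives of $F$: the derivatives of $PM_1$ and of $PM_2$ contribute two overlapping rectangular boxes of lattice points, with a controlled correction on the overlap caused by the binomial cancellation. On the other hand, $\dim_k R/J$ is obtained by choosing a monomial order in which the leading terms of the generators are pure powers; the resulting initial ideal is a monomial complete intersection whose standard monomials are easy to enumerate, and the mixed generator cuts down the count by exactly the overlap computed above. Matching the two totals forces $J=\operatorname{Ann}_R(F)$ and simultaneously shows that the $m+n$ listed generators form a regular sequence, giving the ``if'' direction of (1).

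The main obstacle is the converse of (1): showing that $R/\operatorname{Ann}_R(F)$ is not a complete intersection when $d_1,d_2\ge 2$, or when $d_1\ge 2$, $d_2=1$, and $a_{m+1}+1<vb_{m+1}$. My strategy is to show the minimal number of generators of $\operatorname{Ann}_R(F)$ strictly exceeds $m+n$. In the obstructed regime the telescoping above breaks down: either the exponent $a_{m+1}+1-ib_{m+1}$ turns negative before the cancellation completes, or the presence of more than one active variable on each side of the binomial forces a single mixed relation to split into several independent ones. Each active pure power $x_i^{a_i+b_i+1}$ must then appear as a minimal generator, and at least one genuinely mixed generator is still required to kill $F$; careful bookkeeping shows the total exceeds $m+n$. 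Rigorously executing this count---most plausibly through a Hilbert function comparison against a hypothetical complete intersection of the right multidegrees, or by analysing $\operatorname{Ann}_R(F)/\mathfrak m\operatorname{Ann}_R(F)$ where $\mathfrak m=(x_1,\dots,x_{m+n})$---is the most delicate step of the proof.
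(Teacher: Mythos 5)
The telescoping verification that each candidate ideal $J$ is contained in $\operatorname{Ann}_R(F)$ is correct and matches the paper, but the proposal has a genuine gap at its core: the ``only if'' half of (1) is never proved. You only name a strategy (show that the minimal number of generators of $\operatorname{Ann}_R(F)$ exceeds $m+n$ when $d_2\ge 2$, or when $d_1\ge 2$, $d_2=1$ and $a_{m+1}+1<vb_{m+1}$), and the key assertions behind it are unsubstantiated; one of them is in fact false in the neighbouring cases. For instance, the claim that every ``active'' pure power $x_i^{a_i+b_i+1}$ must be a minimal generator already fails in case (2)(b) of the theorem: for $F=X_1^2(X_1-X_2)$ one has $\operatorname{Ann}_R(F)=(x_2^2,\,x_1^3+x_1^2x_2)$ and $x_1^4=(x_1-x_2)(x_1^3+x_1^2x_2)+x_1^2\cdot x_2^2\in\mathfrak m\operatorname{Ann}_R(F)$, so $x_1^{a_1+b_1+1}$ belongs to no minimal generating set. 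So ``careful bookkeeping'' cannot be waved through, and a Hilbert-function comparison against a hypothetical complete intersection is not set up either. The paper's argument here is of a different nature and is the real content of Propositions \ref{prop not c.i n>1} and \ref{prop not c.i n=1}: assuming a regular sequence of generators $f_1,\dots,f_{m+n}$, it uses constraints on the terms that elements of $\operatorname{Ann}_R(F)$ can contain (Lemma \ref{lem property in Ann} and the Claims) together with the generator-exchange Lemma \ref{lem chage generator} to replace $f_1,f_2$ by two explicit annihilator elements with a common factor (namely $x_1^{a_1+1}x_{m+1}^{a_{m+1}+1},\,x_2^{a_2+1}x_{m+1}^{a_{m+1}+1}$, resp. $x_1^{a_1+1-wb_1}q,\,x_2^{a_2+1-wb_2}q$), which cannot be part of a regular sequence. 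Some argument of this calibre is required; nothing in your sketch substitutes for it.

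The reverse inclusions in (2) and (3) are also only sketched, and one stated step fails. In case (2)(c) ($v=w$) every term of $q=\sum_{i=0}^{v-1}c_1^{v-1-i}c_2^i x_1^{a_1+1-(v-1-i)b_1}x_{m+1}^{a_{m+1}+1-ib_{m+1}}$ is divisible by both $x_1$ and $x_{m+1}$ (the exponents are positive precisely because $v=\min\{i:a_1+1\le ib_1\}$ and $w=v$), so no monomial order makes its leading term a pure power; consequently the leading terms of $p$ and $q$ do not even generate an Artinian ideal in $k[x_1,x_{m+1}]$, the listed generators are not a Gr\"obner basis, and ``the resulting initial ideal is a monomial complete intersection whose standard monomials are easy to enumerate'' is false as stated. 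You would need to produce the extra initial-ideal elements (the paper's identities such as $c_1^vx_2^{(v+1)b_2}=x_2^{b_2}p-c_2x_1^{vb_1-a_1-1}x_2^{vb_2-a_2-1}q$ play exactly this role) and also prove, not assert, the matching lower bound for $\dim_k(R\circ F)$. The paper avoids all of this: Proposition \ref{prop c.i n=1} proves $\operatorname{Ann}_R(F)\subseteq I$ by a minimal-counterexample cancellation argument on the number of terms, the case $v=w$ is settled by the determinant/socle Lemma \ref{lem equal of c.i ideal} applied in Lemma \ref{lem c.i m=n=1 Ann(F)}, and Lemma \ref{lem c.i add variable} handles the inert variables. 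Your dimension-count route could likely be repaired for (2)(a), (2)(b), (3), but as written the proofs of (2)(c) and, above all, of the ``only if'' direction of (1) are missing.
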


For a graded Artinian algebra  $A$ over $k$, we say that
$A$ has the strong Lefschetz property if there exists $z\in A_1$ such that  the multiplication map $\times z^d: A_i \to A_{i+d}$ has maximal rank for any $i,d\in \mathbb Z_{\ge 0}$.
In \cite{RRR}, Reid, Roberts and Roitman conjectured that any graded Artinian complete intersection ring has the strong Lefschetz property if  $\operatorname{char}k=0$.
This conjecture holds if $A$ is a monomial complete intersection (\cite{S},\cite{W85}) or $A$ is an Artinian ring of embedding dimension two (\cite{HMNW}).
By the conjecture for monomial complete intersections, it follows that the conjecture holds for $R/\operatorname{Ann}_R(F)$, where $F$ is a monomial.
In this paper, we consider the following problem.

\begin{prob}\label{SLP problem for binomial}
Does $R/\operatorname{Ann}_R(F)$ have the strong Lefschetz property for a homogeneous binomial $F$ if $\operatorname{char}k=0$ and $R/\operatorname{Ann}_R(F)$ is a complete intersection?
\end{prob}

Problem \ref{SLP problem for binomial} is solved  positively when $N=3$ and $k$ is an algebraically  closed field (\cite{ADFMMSV}), or when $F=X_1^{a_1}\cdots X_{N}^{a_{N}}(m_1-m_2)$, where $m_1$ and $m_2$ are monomials in the variables $X_1,X_2,X_3$ (\cite{ADMMSV}).
In this paper, as an application of Theorem \ref{thm in intro}, we prove that Problem \ref{SLP problem for binomial} is affirmative.

This paper is organized as follows.
In Section 2, we characterize a binomial $F\in S$ such that $R/\operatorname{Ann}_R(F)$ is a complete intersection.
In Section 3, we prove that $R/\operatorname{Ann}_R(F)$ has the strong Lefschetz property for a homogeneous binomial $F$ if $\operatorname{char}k=0$ and $R/\operatorname{Ann}_R(F)$ is a complete intersection.

\begin{acknowledgement}
The  author is partially supported by JSPS KAKENHI No. 19K14496 and 23K12958.

\end{acknowledgement}

\section{binomial Macaulay dual generator for c.i}   
In this section, we characterize a binomial such that the Artinian algebra whose Macaulay dual generator is the binomial is a complete intersection.

First, we will define an action on a polynomial ring, and an annihilator of a polynomial using the action. 
\begin{defn}
Let $R=k[x_1,\dots,x_N]$ and $S=k[X_1,\dots,X_N]$ be polynomial rings.
\begin{enumerate}
\item
We define the action $\circ$ of $R$ on $S$ by
\[
x_1^{a_1}\cdots x_N^{a_N}\circ X_1^{b_1}\cdots X_N^{b_N}=
  \begin{cases}
    X_1^{b_1-a_1}\cdots X_N^{b_N-a_N} & \text{if $b_i\ge a_i$ for any $i$,} \\
    0                 & \text{othewise}
  \end{cases},
\]
for any $a_1,\dots,a_N,b_1,\dots, b_N\in \mathbb Z_{\ge 0}$.

\item
For $F\in S\setminus\{0\}$, we define the $(x_1,\dots, x_N)$-primary ideal $\operatorname{Ann}_R(F)$ by
\[
\operatorname{Ann}_R(F)=\{f\in R \mid f\circ F=0\}.
\]

\item
Let $A=R/I$ be an Artinian Gorenstein ring.
We say $F\in S$ is a Macaulay dual generator of $A$ if $I=\operatorname{Ann}_R(F)$.

\end{enumerate}
\end{defn}


\begin{rem}
$R/\operatorname{Ann}_R(F)$ is an Artinian Gorenstein ring for any $F\in S\setminus\{0\}$. Moreover, if $R/I$ is an Artinian Gorenstein ring with $\sqrt{I}=(x_1,\dots, x_N)$, then there exists $F\in S$ such that $I=\operatorname{Ann}_R(F)$ (see \cite[Theorem 21.6]{Eis}, \cite[Theorem 2.1]{HW}).
\end{rem}

For $f=\sum_{(i_1,\dots,i_N)\in \mathbb Z_{\ge 0}^N}c_{i_1,\dots,i_N}x_1^{i_1}\cdots x_N^{i_N}$\ $(c_{i_1,\dots,i_N}\in k)$,
we say $f$ cantains the term $x_1^{i_1}\cdots x_N^{i_N}$ if $c_{i_1,\dots,i_N}\neq 0$.
We write $c_{i_1,\dots,i_N}x_1^{i_1}\cdots x_N^{i_N} \in f$ if $f$ contains the term $x_1^{i_1}\cdots x_N^{i_N}$.

\begin{lem}\label{lem property in Ann}
Let $R=k[x_1,\dots,x_{m+n}]$ and $S=k[X_1,\dots,X_{m+n}]$ be  polynomial rings.
Let \[F=X_1^{a_1}\cdots X_{m+n}^{a_{m+n}}(X_1^{b_1}\cdots X_m^{b_m}-cX_{m+1}^{b_{m+1}}\cdots X_{m+n}^{b_{m+n}})\in S\]
be a binomial, where $a_1,\dots,a_{m+n},b_1\dots,b_{m+n}\in \mathbb Z_{\ge 0}$ and $c\in k\setminus\{0\}$.
Let $F_1$ and $F_2$ be monomials such that $F=F_1-cF_2$, that is 
\[F_1=X_1^{a_1+b_1}\cdots X_m^{a_m+b_m}X_{m+1}^{a_{m+1}}\cdots X_{m+n}^{a_{m+n}}\]
\[ F_2=X_1^{a_1}\cdots X_m^{a_m}X_{m+1}^{a_{m+1}+b_{m+1}}\cdots X_{m+n}^{a_{m+n}+b_{m+n}}.\]
Let $f\in \operatorname{Ann}_R(F)$ and let $g=dx_1^{s_1}\cdots x_{m+n}^{s_{m+n}}$ be a monomial of $R$, where $s_1,\dots,s_{m+n}\in \mathbb Z_{\ge 0}$ and $d\in k\setminus\{0\}$.
Suppose that $g\in f$.
\begin{enumerate}

\item If $g\circ F_1\neq 0$, then $s_{i}\ge b_{i}$ for any $i=1,\dots, m$ and 
\[c^{-1}dx_1^{s_1-b_1}\cdots x_m^{s_m-b_m}x_{m+1}^{s_{m+1}+b_{m+1}} \cdots x_{m+n}^{s_{m+n}+b_{m+n}}\in f.\]
In particular, if $s_i<b_i$ for some $i$ with $1\le i\le m$, then $g\circ F_1=0$.

\item If $g\circ F_2\neq 0$, then $s_{i}\ge b_{i}$ for any $i=m+1,\dots, m+n$ and 
\[cdx_1^{s_1+b_1}\cdots x_m^{s_m+b_m}x_{m+1}^{s_{m+1}-b_{m+1}} \cdots x_{m+n}^{s_{m+n}-b_{m+n}}\in f.\]
In particular, if $s_i<b_i$ for some $i$ with $m+1\le i\le m+n$, then $g\circ F_2=0$.

\item 
Suppose that $b_j>0$ and $s_j=\max\{t_j\in \mathbb Z_{\ge 0} \mid ex_1^{t_1}\cdots x_{m+n}^{t_{m+n}}\in f, e\neq 0 \}$ for some $j$ with $m+1\le j \le m+n$.
Then $g\circ F_1=0$.
\end{enumerate}
\end{lem}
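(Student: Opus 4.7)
The plan is to work from $f\circ F=0$, which by $k$-linearity of the action reads as the identity
\[
f\circ F_1 \;=\; c\,(f\circ F_2)
\]
in $S$. The crucial observation is that, for a fixed monomial $F_i$ of $S$, the map sending an $R$-monomial $x_1^{t_1}\cdots x_{m+n}^{t_{m+n}}$ to $x_1^{t_1}\cdots x_{m+n}^{t_{m+n}}\circ F_i$ is injective on those $R$-monomials that do not annihilate $F_i$, since the exponent of $X_j$ in the image uniquely determines $t_j$. Consequently, for any monomial $M$ of $S$, the coefficient of $M$ in $f\circ F_i$ is exactly the coefficient in $f$ of the unique $R$-monomial mapping to $M$, or zero when no such preimage exists.

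For part (1), apply this principle to
\[
M \;=\; d^{-1}(g\circ F_1) \;=\; X_1^{a_1+b_1-s_1}\cdots X_m^{a_m+b_m-s_m}X_{m+1}^{a_{m+1}-s_{m+1}}\cdots X_{m+n}^{a_{m+n}-s_{m+n}}.
\]
The coefficient of $M$ in $f\circ F_1$ is $d$, so the displayed identity forces the coefficient of $M$ in $f\circ F_2$ to equal $c^{-1}d\ne 0$. The only $R$-monomial that can map to $M$ under $\circ F_2$ has exponents $s_i-b_i$ for $i\le m$ and $s_i+b_i$ for $i>m$. If $s_i<b_i$ for some $i\le m$, no such $R$-monomial exists, forcing $c^{-1}d=0$, a contradiction. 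Hence $s_i\ge b_i$ for all $i\le m$, and the stated monomial appears in $f$ with coefficient $c^{-1}d$; the ``in particular'' clause is the contrapositive.

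Part (2) is the mirror argument: set $M' = d^{-1}(g\circ F_2)$, read off the coefficient of $M'$ from both sides of the identity, and use the analogous uniqueness of a preimage under $\circ F_1$ to conclude $s_i\ge b_i$ for $i>m$ together with the stated monomial having coefficient $cd$ in $f$. Part (3) then follows at once from part (1): if $g\circ F_1\ne 0$ and $b_j>0$ for some $j$ with $m+1\le j\le m+n$, then part (1) exhibits a monomial in $f$ whose $x_j$-exponent equals $s_j+b_j>s_j$, contradicting the assumed maximality of $s_j$. No serious obstacle is expected; the only care required is in establishing the injectivity observation and in tracking the coefficients and exponent shifts correctly.
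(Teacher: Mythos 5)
Your proof is correct and follows essentially the same route as the paper: both rewrite $f\circ F=0$ as $f\circ F_1=c\,(f\circ F_2)$, use that distinct $R$-monomials acting on a fixed monomial $F_i$ give distinct monomials (so coefficients can be matched term by term), and compare exponents and coefficients to produce the shifted monomial of $f$, with (3) deduced from (1) by the same maximality contradiction. Your explicit statement of the injectivity/no-cancellation observation is just a slightly more detailed rendering of the paper's ``there exists a monomial $g'\in f$ with $g'\circ F_1=g\circ cF_2$'' step.
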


\begin{proof}

We omit the proof of (1) since (1) can be proved in the same manner as (2).

(2) Since $f\in \operatorname{Ann}_R(F)$, $g\in f$ and $g\circ F_2\neq 0$, there exists a monomial $g'=d'x_1^{s'_1}\cdots x_{m+n}^{s'_{m+n}}$  such that $g'\in f$ and $g\circ cF_2=g'\circ F_1$. 
Note that 
\[
g'\circ F_1=d'x_1^{a_1+b_1-s'_1}\cdots x_m^{a_m+b_m-s'_m}x_{m+1}^{a_{m+1}-s'_{m+1}} \cdots x_{m+n}^{a_{m+n}-s'_{m+n}},
\]
\[
g\circ cF_2=cdx_1^{a_1-s_1}\cdots x_m^{a_m-s_m}x_{m+1}^{a_{m+1}+b_{m+1}-s_{m+1}} \cdots x_{m+n}^{a_{m+n}+b_{m+n}-s_{m+n}}.
\]
By comparing the coefficients and the degrees of $g\circ cF_2$ and $g'\circ F_1$, we have $d'=cd$, $s'_i=s_i+b_i$ for $i=1,\dots, m$ and $s'_j=s_j-b_j$ for $j=m+1,\dots, m+n$.
 Hence $s_j\ge b_j$ for $j=m+1,\dots, m+n$ and $cdx_1^{s_1+b_1}\cdots x_m^{s_m+b_m}x_{m+1}^{s_{m+1}-b_{m+1}} \cdots x_{m+n}^{s_{m+n}-b_{m+n}}\in f$.

(3) Suppose, for the sake of contradiction, that $g\circ F_1\neq 0$.
We have \[c^{-1}dx_1^{s_1-b_1}\cdots x_m^{s_m-b_m}x_{m+1}^{s_{m+1}+b_{m+1}} \cdots x_{m+n}^{s_{m+n}+b_{m+n}}\in f\] by (1).
However, the degree of this monomial in $x_j$ is $s_j+b_j$, which contradicts the assumption that 
\[s_j=\max\{t_j\in \mathbb Z_{\ge 0} \mid ex_1^{t_1}\cdots x_{m+n}^{t_{m+n}}\in f, e\neq 0\}.\]
Therefore, we conclude that $g\circ F_1=0$.

\end{proof}

We will use the following lemma in the proof of Proposition \ref{prop not c.i n>1} and Proposition \ref{prop not c.i n=1}.

\begin{lem}\label{lem chage generator}
Let $(R,\mathfrak m)$ be a local ring and let $f_1,\dots,f_N, h_1,\cdots, h_N, h'_1,\dots,h'_N$ be elements of $R$, where $N\in \mathbb N$ with $N\ge 2$.
Let $g_1,g_2$ be elements of $R$ such that 
\[
g_1=h_1f_1+\cdots+h_Nf_N,\quad g_2=h'_1f_1+\cdots+h'_Nf_N.
\]
Suppose that $h_1,h'_2\not\in \mathfrak m$ and $h'_1\in \mathfrak m$.
Then
\[
(f_1,f_2,f_3,\dots,f_N)=(g_1,g_2,f_3,\dots,f_N)
\]
\end{lem}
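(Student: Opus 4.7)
The forward inclusion $(g_1,g_2,f_3,\dots,f_N)\subseteq(f_1,\dots,f_N)$ is immediate from the definitions of $g_1$ and $g_2$, since they are already written as $R$-linear combinations of $f_1,\dots,f_N$. So the whole content of the lemma is the reverse inclusion, which amounts to showing that $f_1$ and $f_2$ both lie in $(g_1,g_2,f_3,\dots,f_N)$.

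The plan is to exploit the hypothesis that certain elements are units in the local ring $(R,\mathfrak m)$. First I would use that $h_1\notin\mathfrak m$ is a unit to solve the equation $g_1=h_1f_1+h_2f_2+\cdots+h_Nf_N$ for $f_1$, obtaining
\[
f_1 = h_1^{-1}g_1 - h_1^{-1}h_2 f_2 - \sum_{i=3}^{N} h_1^{-1}h_i f_i,
\]
which shows $f_1\in(g_1,f_2,f_3,\dots,f_N)$. Next I would substitute this expression for $f_1$ into the relation $g_2=h'_1f_1+h'_2f_2+\cdots+h'_Nf_N$, collect terms, and read off the coefficient of $f_2$, which comes out to $h'_2-h'_1h_1^{-1}h_2$.

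The key point (and the only step requiring the full strength of the hypotheses) is that this new coefficient is again a unit: since $h'_1\in\mathfrak m$, the product $h'_1h_1^{-1}h_2$ lies in $\mathfrak m$, so the difference $h'_2-h'_1h_1^{-1}h_2$ is a unit because $h'_2\notin\mathfrak m$ and $\mathfrak m$ is closed under subtraction from units. This lets me solve for $f_2$ as an $R$-linear combination of $g_1,g_2,f_3,\dots,f_N$, and then substituting back shows $f_1$ lies in the same ideal. I do not expect any serious obstacle; the lemma is essentially elementary Gaussian elimination made legitimate by the local hypothesis, and the only subtle point is verifying that the resulting $2\times 2$ coefficient system over $R$ is invertible, which reduces to checking that $h'_2-h'_1h_1^{-1}h_2\notin\mathfrak m$.
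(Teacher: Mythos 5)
Your proposal is correct and is essentially the paper's own argument in mirror image: the paper first solves for $f_2$ using the unit $h'_2$ and then observes that the resulting coefficient $h_1-h_2{h'_2}^{-1}h'_1$ of $f_1$ is a unit because $h'_1\in\mathfrak m$, whereas you solve for $f_1$ using the unit $h_1$ and check that $h'_2-h'_1h_1^{-1}h_2$ is a unit -- the same Gaussian-elimination idea with the same use of the hypotheses. No gap; the two versions are interchangeable.
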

\begin{proof}
Let $I=\{1,3,4,\dots,N\}$.
Since $h'_2\not\in \mathfrak m$, we have 
$f_2={h'_2}^{-1}\left(g_2-\sum_{i\in I}h'_if_i\right)$.
Hence, we obtain
\begin{align*}
g_1&=h_1f_1+h_2f_2+\cdots+h_Nf_N\\
&=h_1f_1+h_2{h'_2}^{-1}\left(g_2-\sum_{i\in I}h'_if_i\right)+h_3f_3+\cdots+h_Nf_N\\
&=\sum_{i\in I}\left(h_i-h_2{h'_2}^{-1}h'_i\right)f_i+h_2{h'_2}^{-1}g_2.
\end{align*}
Since $h_1\not\in \mathfrak m$ and $h'_1\in \mathfrak m$,
we conclude that $h_1-h_2{h'_2}^{-1}h'_1\not\in \mathfrak m$.
Therefore, we have 
\[
(g_1,g_2,f_3,\dots,f_N)=(f_1,g_2,f_3,\dots,f_N)=(f_1,f_2,f_3,\dots,f_N)
\]
\end{proof}

\begin{prop}\label{prop not c.i n>1}
Let $m,n\in \mathbb N$ with $m,n\ge 2$.
Let $R=k[x_1,\dots,x_{m+n}]$ and $S=k[X_1,\dots,X_{m+n}]$ be  polynomial rings.
Let \[F=X_1^{a_1}\cdots X_{m+n}^{a_{m+n}}(X_1^{b_1}\cdots X_m^{b_m}-cX_{m+1}^{b_{m+1}}\cdots X_{m+n}^{b_{m+n}})\in S\] 
be a binomial, where $a_1,\dots,a_{m+n},b_1\dots,b_{m+n}\in \mathbb Z_{\ge 0}$ and $c\in k\setminus\{0\}$.
Suppose that 
\[\#\{i \mid b_i\neq 0,\ i=1,\dots, m\}\ge 2,\]
\[\#\{i \mid b_i\neq 0,\ i=m+1,\dots, m+n\}\ge 2.\] 
Then $R/\operatorname{Ann}_R(F)$ is not a complete intersection.
\end{prop}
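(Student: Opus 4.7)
The plan is to show $\mu(\operatorname{Ann}_R(F))\ge m+n+1$, so that $R/\operatorname{Ann}_R(F)$ cannot be a complete intersection. After relabelling the variables, I may assume $b_1,\dots,b_{d_1}>0$, $b_{d_1+1}=\cdots=b_m=0$, $b_{m+1},\dots,b_{m+d_2}>0$, and $b_{m+d_2+1}=\cdots=b_{m+n}=0$, where by hypothesis $d_1,d_2\ge 2$. For each $k$, set $e_k=a_k+b_k+1$ if $b_k>0$ and $e_k=a_k+1$ if $b_k=0$; a direct computation gives $x_k^{e_k}\in\operatorname{Ann}_R(F)$. Choose $i\in\{1,\dots,d_1\}$ and $j\in\{m+1,\dots,m+d_2\}$ and set $g=x_i^{a_i+1}x_j^{a_j+1}$. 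Then $g\circ F_1=0$ (the factor $x_j^{a_j+1}$ overshoots $X_j^{a_j}$) and $g\circ F_2=0$ (the factor $x_i^{a_i+1}$ overshoots $X_i^{a_i}$), so $g\in\operatorname{Ann}_R(F)$. I will show the $m+n+1$ elements $x_1^{e_1},\dots,x_{m+n}^{e_{m+n}},g$ are linearly independent in $\operatorname{Ann}_R(F)/\mathfrak m\operatorname{Ann}_R(F)$, where $\mathfrak m=(x_1,\dots,x_{m+n})$.

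Assume a relation $\sum_k \alpha_k x_k^{e_k}+\beta g=\sum_l x_l p_l$ with $p_l\in\operatorname{Ann}_R(F)$ and match monomial coefficients on both sides. The only way $x_l\cdot x^t=x_k^{e_k}$ is $l=k$ and $x^t=x_k^{e_k-1}$, so $\alpha_k$ equals the coefficient of $x_k^{e_k-1}$ in $p_k$; likewise $\beta$ equals the sum of the coefficients of $x_i^{a_i}x_j^{a_j+1}$ in $p_i$ and of $x_i^{a_i+1}x_j^{a_j}$ in $p_j$. It therefore suffices to show that none of the monomials $x_k^{e_k-1}$ (for any $k$), $x_i^{a_i}x_j^{a_j+1}$, or $x_i^{a_i+1}x_j^{a_j}$ can appear as a term of any element of $\operatorname{Ann}_R(F)$. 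For each of them, a short calculation shows that exactly one of the actions on $F_1$ and $F_2$ is nonzero: for $x_k^{e_k-1}$ with $k\le m$ (in both sub-cases $b_k>0$ and $b_k=0$) it is the action on $F_1$; for $k>m$ it is the action on $F_2$; for $x_i^{a_i}x_j^{a_j+1}$ it is the action on $F_2$; and for $x_i^{a_i+1}x_j^{a_j}$ it is the action on $F_1$. Lemma \ref{lem property in Ann}(1) or (2) then forces the exponent vector to satisfy $s_\ell\ge b_\ell$ for every $\ell$ on the corresponding side of $\{1,\dots,m\}\sqcup\{m+1,\dots,m+n\}$. But each of these monomials is supported on at most one index of each side, whereas $d_1,d_2\ge 2$ guarantees at least two indices with positive $b_\ell$ on each side, yielding a contradiction. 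Hence $\alpha_k=\beta=0$ for every $k$.

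The crucial creative step is the choice of the extra generator $g=x_i^{a_i+1}x_j^{a_j+1}$: it is designed so that both of its ``neighbours'' $g/x_i=x_i^{a_i}x_j^{a_j+1}$ and $g/x_j=x_i^{a_i+1}x_j^{a_j}$ violate the support conclusion of Lemma \ref{lem property in Ann}, and this is precisely where the hypothesis $d_1,d_2\ge 2$ is consumed. The main bookkeeping obstacle is the case analysis of $x_k^{e_k-1}$ into four sub-cases ($k\le m$ or $k>m$, and $b_k>0$ or $b_k=0$), but in each sub-case the contradiction arises through the same mechanism. Combining everything yields $\mu(\operatorname{Ann}_R(F))\ge m+n+1>m+n$, so $R/\operatorname{Ann}_R(F)$ is not a complete intersection.
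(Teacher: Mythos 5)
Your proof is correct, but it takes a genuinely different route from the paper's. The paper argues by contradiction: assuming $\operatorname{Ann}_R(F)$ is generated by a regular sequence $f_1,\dots,f_{m+n}$, it uses Claim \ref{claim1} (itself a consequence of Lemma \ref{lem property in Ann}) together with the generator-exchange Lemma \ref{lem chage generator} to show that, locally at $\mathfrak m$, two of the $f_i$ can be replaced by $x_1^{a_1+1}x_{m+1}^{a_{m+1}+1}$ and $x_2^{a_2+1}x_{m+1}^{a_{m+1}+1}$; these share the factor $x_{m+1}^{a_{m+1}+1}$, so they cannot form part of a regular sequence. You instead bound the minimal number of generators from below: you exhibit the $m+n+1$ elements $x_1^{e_1},\dots,x_{m+n}^{e_{m+n}},\,x_i^{a_i+1}x_j^{a_j+1}$ of $\operatorname{Ann}_R(F)$ and prove they are $k$-linearly independent in $\operatorname{Ann}_R(F)/\mathfrak m\operatorname{Ann}_R(F)$, the required monomial-exclusion facts being exactly what Lemma \ref{lem property in Ann}(1)(2) delivers (a monomial lying in some element of $\operatorname{Ann}_R(F)$ whose support misses a variable with positive $b_\ell$ on the relevant side cannot act nontrivially on the corresponding $F_1$ or $F_2$), whence $\mu(\operatorname{Ann}_R(F))\ge m+n+1>m+n$. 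This avoids Lemma \ref{lem chage generator} and the regular-sequence contradiction entirely and is arguably more direct; the paper's formulation, on the other hand, runs in parallel with its proof of Proposition \ref{prop not c.i n=1}, where the two offending elements are no longer monomials and a direct count of $\dim_k I/\mathfrak m I$ would be harder to organize. Two small slips in your write-up, neither affecting correctness: when $b_k=0$ both actions $x_k^{a_k}\circ F_1$ and $x_k^{a_k}\circ F_2$ are nonzero (so ``exactly one'' is wrong there, but you only use the one you named, and that suffices for the contradiction); and the hypothesis $d_1,d_2\ge 2$ is consumed not only at the two neighbours of $g$ but also in excluding $x_k^{e_k-1}$ for those $k$ with $b_k>0$.
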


\begin{proof}
We may assume that $b_1,b_2,b_{m+1},b_{m+2}\ge 1$.
Suppose, for the sake of contradiction, that $R/\operatorname{Ann}_R(F)$ is a complete intersection.
Let $f_1,\dots,f_{m+n}$ be generators of  $\operatorname{Ann}_R(F)$.
Then $f_1,\dots,f_{m+n}$ is a regular sequence on $R$.
We now claim the following.
\begin{claim}\label{claim1}
For $l=1,2$, if $dx_l^{s_l}x_{m+1}^{s_{m+1}}\in f_i$ with $d\in k\setminus\{0\}$, $s_l\le a_l+1$ and $s_{m+1}\le a_{m+1}+1$ for some $i$, then $s_l=a_l+1$ and $s_{m+1}=a_{m+1}+1$.
\end{claim}
\begin{proof}[Proof of Claim \ref{claim1}]
We will prove only the case $l=1$ since the case $l=2$ can be proved in the same way.

Let $g=dx_1^{s_1}x_{m+1}^{s_{m+1}}$.
Since $b_2\ge 1$ and $b_{m+2}\ge 1$, by Lemma \ref{lem property in Ann}(1)(2), we have $g\circ F_1=g \circ F_2=0$, where $F_1=X_1^{a_1+b_1}\cdots X_m^{a_m+b_m}X_{m+1}^{a_{m+1}}\cdots X_{m+n}^{a_{m+n}}$ and
$F_2=X_1^{a_1}\cdots X_m^{a_m}X_{m+1}^{a_{m+1}+b_{m+1}}\cdots X_{m+n}^{a_{m+n}+b_{m+n}}$.
Hence $s_1\ge a_1+1$ and $s_{m+1}\ge a_{m+1}+1$.
This proves the claim.
\end{proof}

We now return to the proof of Proposition \ref{prop not c.i n>1}.
Let $\mathfrak m=(x_1,\dots,x_{m+n})$.
Since $x_1^{a_1+1}x_{m+1}^{a_{m+1}+1}\in \operatorname{Ann}_R(F)$, there exist elements $h_1,\dots, h_{m+n}$ of $R$ such that 
\[
x_1^{a_1+1}x_{m+1}^{a_{m+1}+1}=h_1f_1+\cdots +h_{m+n}f_{m+n}.
\]
By Claim \ref{claim1}, $f_j$ contains the term $x_1^{a_1+1}x_{m+1}^{a_{m+1}+1}$  and $h_j\not\in\mathfrak m$  for some $j$.
After renumbering, we may assume that $f_1$ contains the term $x_1^{a_1+1}x_{m+1}^{a_{m+1}+1}$ and $h_1\not\in\mathfrak m$.
By replacing $f_i$ with $f_i-c_if_1$ for $i=2,\dots, m+n$ and some $c_i\in k$, we may assume that $f_i$ does not contain the terms $x_1^{s_1}x_{m+1}^{s_{m+1}}$ for $s_1\le a_1+1$, $s_{m+1}\le a_{m+1}+1$  and $i=2,\dots, m+n$ by Claim \ref{claim1}.

Since $x_2^{a_2+1}x_{m+1}^{a_{m+1}+1}\in \operatorname{Ann}_R(F)$, there exist elements $h'_1,\dots, h'_{m+n}$ of $R$ such that 
\[
x_2^{a_2+1}x_{m+1}^{a_{m+1}+1}=h'_1f_1+\cdots +h'_{m+n}f_{m+n}.
\]
By Claim \ref{claim1}, we may assume that $f_j$ contains the term $x_2^{a_2+1}x_{m+1}^{a_{m+1}+1}$ and $h'_j\not\in\mathfrak m$ for some $j$.
Since $f_1$ contains the term $x_1^{a_1+1}x_{m+1}^{a_{m+1}+1}$ and $f_i$ does not contain the terms $x_1^{s_1}x_{m+1}^{s_{m+1}}$ for $s_1\le a_1+1$, $s_{m+1}\le a_{m+1}+1$ and $i=2,\dots, m+n$, we have $j\neq 1$ and $h'_1\in\mathfrak m$.
Hence we may assume that $f_2$ contains the term $x_2^{a_2+1}x_{m+1}^{a_{m+1}+1}$ and $h'_2\not\in\mathfrak m$.
Therefore we have
\[
(f_1,\dots,f_{m+n})R_{\mathfrak m}=(x_1^{a_1+1}x_{m+1}^{a_{m+1}+1}, x_2^{a_2+1}x_{m+1}^{a_{m+1}+1}, f_3,\dots,f_{m+n})R_{\mathfrak m}
\]
by Lemma \ref{lem chage generator}.
Since $x_1^{a_1+1}x_{m+1}^{a_{m+1}+1}, x_2^{a_2+1}x_{m+1}^{a_{m+1}+1}$ is not a regular sequence,
this contradicts $f_1,\dots,f_{m+n}$ is a regular sequence on $R$.
Therefore $R/\operatorname{Ann}_R(F)$ is not a complete intersection.
\end{proof}

\begin{prop}\label{prop not c.i n=1}
Let $m\in \mathbb N$ with $m\ge 2$.
Let $R=k[x_1,\dots,x_{m+1}]$ and $S=k[X_1,\dots,X_{m+1}]$ be  polynomial rings.
Let \[F=X_1^{a_1}\cdots X_{m+1}^{a_{m+1}}(X_1^{b_1}\cdots X_m^{b_m}-cX_{m+1}^{b_{m+1}})\in S\] 
be a binomial, where $a_1,\dots,a_{m+1},b_1\dots,b_{m+1}\in \mathbb Z_{\ge 0}$ with $b_{m+1}\ge 1$ and $c\in k\setminus\{0\}$.
Suppose that \[
\#\{i \mid b_i\neq 0,\ i=1,\dots, m\}\ge 2.
\]
Let \[v=\min\{i\in \mathbb N \mid (x_1^{b_1}\cdots x_m^{b_m})^i\circ (X_1^{a_1}\cdots X_m^{a_m})=0\}.\]
Suppose $a_{m+1}+1< vb_{m+1}$.
Then $R/\operatorname{Ann}_R(F)$ is not a complete intersection.
\end{prop}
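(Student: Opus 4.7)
The plan is to mirror the strategy of Proposition~\ref{prop not c.i n>1}: assume for contradiction that $R/\operatorname{Ann}_R(F)$ is a complete intersection with a regular sequence $f_1,\dots,f_{m+1}$ generating $\operatorname{Ann}_R(F)$, exhibit two monomials in $\operatorname{Ann}_R(F)$ that are forced (up to change of basis in the localization at $\mathfrak m=(x_1,\dots,x_{m+1})$) to appear as two of the generators, and derive a contradiction from the fact that they share a common factor. After reordering $x_1,\dots,x_m$, I may assume $b_1,b_2\ge 1$, and I will use the two monomials $x_1^{a_1+1}x_{m+1}^{a_{m+1}+1}$ and $x_2^{a_2+1}x_{m+1}^{a_{m+1}+1}$, both readily checked to lie in $\operatorname{Ann}_R(F)$ since $x_l^{a_l+1}x_{m+1}^{a_{m+1}+1}\circ F_1=0$ (the factor $x_{m+1}^{a_{m+1}+1}$ kills it) and $x_l^{a_l+1}x_{m+1}^{a_{m+1}+1}\circ F_2=0$ (the factor $x_l^{a_l+1}$ kills it).

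The key technical step is to establish the analogue of Claim~1 from Proposition~\ref{prop not c.i n>1}: for $l\in\{1,2\}$ and any term $g=dx_l^{s_l}x_{m+1}^{s_{m+1}}\in f_i$ with $s_l\le a_l+1$ and $s_{m+1}\le a_{m+1}+1$, one has $s_l=a_l+1$ and $s_{m+1}=a_{m+1}+1$. The equation $s_{m+1}=a_{m+1}+1$ follows exactly as in the previous proposition: since $b_{l'}\ge 1$ for $l'\in\{1,2\}\setminus\{l\}$ while $s_{l'}=0$, Lemma~\ref{lem property in Ann}(1) forces $g\circ F_1=0$, which together with $s_l\le a_l+b_l$ forces $s_{m+1}>a_{m+1}$. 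To deduce $s_l=a_l+1$, I assume $s_l\le a_l$ and derive a contradiction from the hypothesis $a_{m+1}+1<vb_{m+1}$. Since then $g\circ F_2\ne 0$, iterated application of Lemma~\ref{lem property in Ann}(2) forces $f_i$ to contain a chain of terms
\[
g_j=c^jdx_l^{s_l+jb_l}\Bigl(\prod_{\substack{l'\ne l\\ b_{l'}\ge 1}}x_{l'}^{jb_{l'}}\Bigr)x_{m+1}^{a_{m+1}+1-jb_{m+1}}
\]
for $j=0,1,\dots,j^*$, where $j^*$ is the smallest index with $g_{j^*}\circ F_2=0$. By the definition of $v$, the chain can only terminate via one of the variables $l'\ne l$ with $b_{l'}\ge 1$ once $j\ge v$, so termination must occur via the $x_l$-coordinate, meaning $s_l+j^*b_l>a_l$. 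Meanwhile, the $x_{m+1}$-exponent in $g_{j^*}$ must be nonnegative, so $j^*\le\lfloor(a_{m+1}+1)/b_{m+1}\rfloor<v$; one combines these inequalities with the step $g_{j^*-1}\to g_{j^*}$ to force a negative $x_{m+1}$-exponent somewhere in the chain, yielding the required contradiction.

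With Claim~1 in hand, the rest of the proof parallels Proposition~\ref{prop not c.i n>1}. Writing $x_1^{a_1+1}x_{m+1}^{a_{m+1}+1}=\sum_i h_if_i$, Claim~1 applied to the monomial expansion forces some $f_j$ to contain the term $x_1^{a_1+1}x_{m+1}^{a_{m+1}+1}$ with the corresponding $h_j\notin\mathfrak m$; after replacing each $f_i$ ($i\ne j$) by $f_i-c_if_j$ for suitable $c_i\in k$, no other $f_i$ contains a term of the form $dx_1^{s_1}x_{m+1}^{s_{m+1}}$ with $s_1\le a_1+1$ and $s_{m+1}\le a_{m+1}+1$. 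Writing now $x_2^{a_2+1}x_{m+1}^{a_{m+1}+1}=\sum_i h_i'f_i$ and applying Claim~1 again identifies some $f_{j'}$ with $j'\ne j$ containing $x_2^{a_2+1}x_{m+1}^{a_{m+1}+1}$ with $h_{j'}'\notin\mathfrak m$ while $h_j'\in\mathfrak m$. Lemma~\ref{lem chage generator} then gives
\[
(f_1,\dots,f_{m+1})R_{\mathfrak m}=\bigl(x_1^{a_1+1}x_{m+1}^{a_{m+1}+1},\,x_2^{a_2+1}x_{m+1}^{a_{m+1}+1},\,f_3,\dots,f_{m+1}\bigr)R_{\mathfrak m}.
\]
Since the two displayed monomials share the factor $x_{m+1}^{a_{m+1}+1}$, they fail to form a regular sequence, contradicting that $f_1,\dots,f_{m+1}$ is one.

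The principal obstacle is establishing Claim~1, specifically the chain argument ruling out $s_l\le a_l$ via the hypothesis $a_{m+1}+1<vb_{m+1}$; the interplay between the forward propagation forced by Lemma~\ref{lem property in Ann}(2), the definition of $v$ constraining termination, and the nonnegativity of the $x_{m+1}$-exponent must be tracked carefully, and the fact that $d_1\ge 2$ (ensuring two variables among $x_1,\dots,x_m$ with $b_l\ge 1$) is what allows the initial step $s_{m+1}=a_{m+1}+1$ to go through.
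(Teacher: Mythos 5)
Your overall strategy (force two elements of $\operatorname{Ann}_R(F)$ into a generating set via an extremality claim, then apply Lemma \ref{lem chage generator} and the failure of regularity) is the paper's, but your key Claim is false whenever $w:=\max\{i\in\mathbb Z_{\ge 0}\mid a_{m+1}+1\ge ib_{m+1}\}\ge 1$, and the chain argument you sketch is exactly where it breaks. After you get $s_{m+1}=a_{m+1}+1$ and assume $s_l\le a_l$, the chain $g_0,g_1,\dots$ produced by Lemma \ref{lem property in Ann}(2) does \emph{not} have to terminate through the $x_l$-coordinate: $g_j\circ F_2=0$ also occurs as soon as the $x_{m+1}$-exponent $a_{m+1}+1-jb_{m+1}$ drops below $b_{m+1}$, which happens at $j=w$ at the latest, and these exponents remain nonnegative for all $j\le w$, so no negative exponent and no contradiction is ever forced. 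Concretely, the element $x_1^{a_1+1-wb_1}q\in\operatorname{Ann}_R(F)$, where $q=\sum_{i=0}^{w}c^i(x_1^{b_1}\cdots x_m^{b_m})^ix_{m+1}^{a_{m+1}+1-ib_{m+1}}$ (this is precisely your chain, stopped harmlessly at $j=w$), contains the term $x_1^{a_1+1-wb_1}x_{m+1}^{a_{m+1}+1}$, whose $x_1$-exponent is $\le a_1$ once $w\ge 1$; since your Claim is argued using only membership in $\operatorname{Ann}_R(F)$, it cannot be established as stated, and with it the step forcing some $f_j$ to contain $x_1^{a_1+1}x_{m+1}^{a_{m+1}+1}$ with unit coefficient $h_j$ collapses (that monomial could instead arise as $x_1^{wb_1}$ times such a smaller term of a generator). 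When $w=0$ your thresholds coincide with the correct ones and your argument does go through, but that is only a special case of the hypothesis $a_{m+1}+1<vb_{m+1}$.

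The paper repairs exactly this point: it proves the weaker extremality claim with thresholds $s_l\le a_l+1-wb_l$ (using $a_i\ge wb_i$ for $i=1,\dots,m$, which follows from $w<v$), and replaces your two monomials by the polynomials $x_1^{a_1+1-wb_1}q$ and $x_2^{a_2+1-wb_2}q$, whose extremal terms $x_l^{a_l+1-wb_l}x_{m+1}^{a_{m+1}+1}$ are then forced into two distinct generators by the same renumbering-and-reduction procedure you describe; since these two polynomials share the common factor $q$, they are not a regular sequence, and the contradiction follows as before. To complete your proof you would need to make this modification rather than work with $x_1^{a_1+1}x_{m+1}^{a_{m+1}+1}$ and $x_2^{a_2+1}x_{m+1}^{a_{m+1}+1}$.
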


\begin{proof}
We may assume that $b_1,b_2\ge 1$.
Let \[w=\max\{i\in \mathbb Z_{\ge 0} \mid a_{m+1}+1\ge ib_{m+1}\}.\]
Since $wb_{m+1}\le a_{m+1}+1<vb_{m+1}$, we have $w<v$.
Hence we have $(x_1^{b_1}\cdots x_m^{b_m})^w\circ (X_1^{a_1}\cdots X_m^{a_m})\neq 0$, which implies that 
\[a_i\ge wb_i\ \text{ for } i=1,\dots, m.\]

Suppose, for the sake of contradiction, that $R/\operatorname{Ann}_R(F)$ is a complete intersection.
Let $f_1,\dots,f_{m+1}$ be generators of  $\operatorname{Ann}_R(F)$.
Then $f_1,\dots,f_{m+1}$ is a regular sequence on $R$.
We now claim the following.
\begin{claim}\label{claim2}
For $l=1,2$,
if $dx_l^{s_l}x_{m+1}^{s_{m+1}}\in f_i$ with $d\in k\setminus\{0\}$, $s_l\le a_l+1-wb_l$ and $s_{m+1}\le a_{m+1}+1$ for some $i$, then $s_l=a_l+1-wb_l$ and $s_{m+1}=a_{m+1}+1$.
\end{claim}
\begin{proof}[Proof of Claim \ref{claim2}]
We will prove only the case $l=1$ since the case $l=2$ can be proved in the same way.

Let $g_0=dx_1^{s_1}x_{m+1}^{s_{m+1}}$.
Since $b_2\ge 1$, by Lemma \ref{lem property in Ann}(1), we have $g_0\circ F_1=0$.
Thus, $s_1\ge a_1+b_1+1$ or $s_{m+1}\ge a_{m+1}+1$.
Since $b_1\ge 1$ and $s_1\le a_1+1-wb_1$, it follows that $s_{m+1}\ge a_{m+1}+1$.
Therefore, we have $s_{m+1}= a_{m+1}+1$.

Next, we will prove that $s_1= a_1+1-wb_1$.
Since $s_1\le a_1+1-wb_1$, it is enough to prove that $s_1\ge a_1+1-wb_1$.
If $w=0$, then $s_{m+1}<b_{m+1}$ by the definition of $w$.
By Lemma \ref{lem property in Ann}(2), we have $g_0\circ F_2=0$.
Therefore we have $s_1\ge a_1+1$.
This implies that $s_1\ge a_1+1-wb_1$ if $w=0$.

Suppose that $w>0$.
Let $g_j:=c^jx_1^{s_1+jb_1}x_2^{jb_2}\cdots x_m^{jb_m}x_{m+1}^{s_{m+1}-jb_{m+1}}$ for $j=1,\dots, w$.
Since $s_1\le a_1+1-wb_1$, $a_i\ge wb_i$ for  $i=1,\dots, m$ and $s_{m+1}=a_{m+1}+1\ge wb_{m+1}$,
we obtain $g_j\circ F_2\neq 0$ for $j=0,\dots, w-1$.
By applying Lemma \ref{lem property in Ann}(2) repeatedly to $g_j$ for $j=0,\dots,w-1$, we have $g_1,\dots,g_w\in f_i$.
Since $s_{m+1}-wb_{m+1}=a_{m+1}+1-wb_{m+1}<b_{m+1}$ by the definition of $w$, we have $g_w\circ F_2=0$ by Lemma \ref{lem property in Ann}(2).
Thus, we have $s_1+wb_1\ge a_1+1$, which implies that $s_1\ge a_1+1-wb_1$.
\end{proof}

We now return to the proof of Proposition \ref{prop not c.i n=1}.
Let \[q=\sum_{i=0}^{w}c^i(x_1^{b_1}\cdots x_m^{b_m})^ix_{m+1}^{a_{m+1}+1-ib_{m+1}},\]
\[F_1=X_1^{a_1+b_1}\cdots X_m^{a_m+b_m}X_{m+1}^{a_{m+1}} \text{  and  } F_2=X_1^{a_1}\cdots X_m^{a_m}X_{m+1}^{a_{m+1}+b_{m+1}}.\]
Since 
\[(x_1^{b_1}\cdots x_m^{b_m})^{i+1}x_{m+1}^{a_{m+1}+1-(i+1)b_{m+1}}\circ F_1=(x_1^{b_1}\cdots x_m^{b_m})^ix_{m+1}^{a_{m+1}+1-ib_{m+1}}\circ F_2\] 
for any $i=0,\dots, w-1$ and $x_{m+1}^{a_{m+1}+1}\circ F_1=x_1^{a_1+1}\circ F_2=x_2^{a_2+1}\circ F_2=0$,
we have \[x_1^{a_1+1-wb_1}q,\, x_2^{a_2+1-wb_2}q\in\operatorname{Ann}_R(F).\]
Let $\mathfrak m=(x_1,\dots,x_{m+1})$.
Since $x_1^{a_1+1-wb_1}q\in \operatorname{Ann}_R(F)$, there exist elements $h_1,\dots, h_{m+1}$ of $R$ such that 
\[
x_1^{a_1+1-wb_1}q=h_1f_1+\cdots +h_{m+1}f_{m+1}.
\]

Note that $x_1^{a_1+1-wb_1}x_{m+1}^{a_{m+1}+1}\in x_1^{a_1+1-wb_1}q$.
By Claim \ref{claim2}, $f_j$ contains the term $x_1^{a_1+1-wb_1}x_{m+1}^{a_{m+1}+1}$  and $h_j\not\in\mathfrak m$  for some $j$.
After renumbering, we may assume that $f_1$ contains the term $x_1^{a_1+1-wb_1}x_{m+1}^{a_{m+1}+1}$ and $h_1\not\in\mathfrak m$.
By replacing $f_i$ with $f_i-c_if_1$ for $i=2,\dots, m+1$ and some $c_i\in k$, we may assume that $f_i$ does not contain the terms $x_1^{s_1}x_{m+1}^{s_{m+1}}$ for $s_1\le a_1+1-wb_1$, $s_{m+1}\le a_{m+1}+1$,  and $i=2,\dots, m+1$ by Claim \ref{claim2}.

Since $x_2^{a_2+1-wb_2}q\in \operatorname{Ann}_R(F)$, there exist elements $h'_1,\dots, h'_{m+n}$ of $R$ such that 
\[
x_2^{a_2+1-wb_2}q=h'_1f_1+\cdots +h'_{m+1}f_{m+1}.
\]
Note that $x_2^{a_2+1-wb_1}x_{m+1}^{a_{m+1}+1}\in x_2^{a_2+1-wb_1}q$.
By Claim \ref{claim2}, we may assume that $f_j$ contains the term $x_2^{a_2+1-wb_2}x_{m+1}^{a_{m+1}+1}$ and $h'_j\not\in\mathfrak m$ for some $j$.
Since $f_1$ contains the term $x_1^{a_1+1-wb_1}x_{m+1}^{a_{m+1}+1}$ and $f_i$ does not contain the terms $x_1^{s_1}x_{m+1}^{s_{m+1}}$ for $s_1\le a_1+1-wb_1$, $s_{m+1}\le a_{m+1}+1$ and $i=2,\dots, m+1$, we have $j\neq 1$ and $h'_1\in\mathfrak m$.
Hence we may assume that $f_2$ contains the term $x_2^{a_2+1-wb_2}x_{m+1}^{a_{m+1}+1}$ and $h'_2\not\in\mathfrak m$.
Therefore we have
\[
(f_1,\dots,f_{m+1})R_{\mathfrak m}=(x_1^{a_1+1-wb_1}q, x_2^{a_2+1-wb_2}q, f_3,\dots,f_{m+1})R_{\mathfrak m}
\]
by Lemma \ref{lem chage generator}.
Since $x_1^{a_1+1-wb_1}q, x_2^{a_2+1-wb_2}q$ is not a regular sequence,
this contradicts $f_1,\dots,f_{m+1}$ is a regular sequence on $R$.
Therefore $R/\operatorname{Ann}_R(F)$ is not a complete intersection.
\end{proof}

\begin{prop}\label{prop c.i n=1}
Let $m\in \mathbb N$.
Let $R=k[x_1,\dots,x_{m+1}]$ and $S=k[X_1,\dots,X_{m+1}]$ be  polynomial rings.
Let \[F=X_1^{a_1}\cdots X_{m+1}^{a_{m+1}}(X_1^{b_1}\cdots X_m^{b_m}-cX_{m+1}^{b_{m+1}})\in S\] 
be a binomial, where $a_1,\dots,a_{m+1},b_1\dots,b_{m+1}\in \mathbb Z_{\ge 0}$ with $b_{m+1}\ge 1$ and $c\in k\setminus\{0\}$.
Suppose that \[
\#\{i \mid b_i\neq 0,\ i=1,\dots, m\}\ge 1.
\]
Let \[v=\min\{i\in \mathbb N \mid (x_1^{b_1}\cdots x_m^{b_m})^i\circ (X_1^{a_1}\cdots X_m^{a_m})=0\}.\]
Suppose $a_{m+1}+1\ge vb_{m+1}$.
Let \[
p=\sum_{i=0}^{v}c^i(x_1^{b_1}\cdots x_m^{b_m})^ix_{m+1}^{a_{m+1}+1-ib_{m+1}}\in R
\]
and
\[
I=(x_1^{a_1+b_1+1}, \dots, x_m^{a_m+b_m+1}, p)\subset R.
\]
Then
\begin{enumerate}
\item $x_i^{a_i+1}x_{m+1}^{a_{m+1}+1},x_{m+1}^{a_{m+1}+b_{m+1}+1}\in I$ for any $i=1,\dots, m$.

\item
$I=\operatorname{Ann}_R(F)$.
In particular, $R/\operatorname{Ann}_R(F)$ is a complete intersection.
\end{enumerate}

\end{prop}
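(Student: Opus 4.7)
The plan is to verify (1) by direct manipulation of $p$, and to prove (2) by establishing both $I\subseteq\operatorname{Ann}_R(F)$ and the length equality $\operatorname{length}_k(R/I)=\operatorname{length}_k(R/\operatorname{Ann}_R(F))$; once these are in hand, the inclusion plus length equality forces $I=\operatorname{Ann}_R(F)$, and $R/I$ is automatically a complete intersection since $I$ is generated by $m+1$ elements in $m+1$ variables and is $(x_1,\dots,x_{m+1})$-primary.

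For (1): expanding $x_i^{a_i+1}p$, every summand with index $j\ge 1$ is divisible by $x_i^{a_i+b_i+1}$ (when $b_i>0$ because the factor $x_i^{jb_i}$ contributes $jb_i\ge b_i$; when $b_i=0$ trivially, since then $x_i^{a_i+b_i+1}=x_i^{a_i+1}$). Setting $m_1:=x_1^{b_1}\cdots x_m^{b_m}$, the telescoping identity
\[
(x_{m+1}^{b_{m+1}}-cm_1)\,p=x_{m+1}^{a_{m+1}+b_{m+1}+1}-c^{v+1}m_1^{v+1}x_{m+1}^{a_{m+1}+1-vb_{m+1}},
\]
together with the existence (from the definition of $v$) of $\ell$ with $(v+1)b_\ell\ge a_\ell+b_\ell+1$, places $x_{m+1}^{a_{m+1}+b_{m+1}+1}$ in $I$. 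For $I\subseteq\operatorname{Ann}_R(F)$: each $x_i^{a_i+b_i+1}$ ($i\le m$) kills both $F_1$ and $F_2$ directly, and $p\circ F$ telescopes to zero via the pairing $m_1^{j+1}x_{m+1}^{a_{m+1}+1-(j+1)b_{m+1}}\circ F_1=m_1^jx_{m+1}^{a_{m+1}+1-jb_{m+1}}\circ F_2$ for $0\le j\le v-1$, with the boundary $j=0$ $F_1$-term and $j=v$ $F_2$-term both zero by the hypotheses. For the length of $R/I$: letting $A:=k[x_1,\dots,x_m]/(x_1^{a_1+b_1+1},\dots,x_m^{a_m+b_m+1})$, the polynomial $p$ is monic in $x_{m+1}$ of degree $a_{m+1}+1$ when viewed in $A[x_{m+1}]$, so $R/I\cong A[x_{m+1}]/(\bar p)$ is free of rank $a_{m+1}+1$ over $A$, giving $L:=\operatorname{length}_k(R/I)=(a_{m+1}+1)\prod_{i=1}^m(a_i+b_i+1)$.

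The main obstacle is the lower bound $\dim_kR\circ F\ge L$. Partition the $L$ exponents $\{\alpha:0\le\alpha_i\le a_i+b_i,\ 0\le\alpha_{m+1}\le a_{m+1}\}$ into $D_1$ (those with $\alpha_j<b_j$ for some $j\le m$ with $b_j>0$) and $D_2$ (the rest). For $\alpha\in D_1$, the action $x_1^{a_1+b_1-\alpha_1}\cdots x_{m+1}^{a_{m+1}-\alpha_{m+1}}\circ F$ kills the $F_2$-summand by Lemma \ref{lem property in Ann}(2) and directly returns $X^\alpha$, so $X^\alpha\in R\circ F$. For $\alpha\in D_2$, iterate $\alpha^{(k+1)}:=(\alpha^{(k)}_1-b_1,\dots,\alpha^{(k)}_m-b_m,\alpha^{(k)}_{m+1}+b_{m+1})$; at every step where $\alpha^{(k)}\in D_2$, the action of $x^{s^{(k)}}$ with $s^{(k)}_i=a_i+b_i-\alpha^{(k)}_i$ and $s^{(k)}_{m+1}=a_{m+1}-\alpha^{(k)}_{m+1}$ produces the relation $X^{\alpha^{(k)}}-cX^{\alpha^{(k+1)}}\in R\circ F$. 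Let $K$ be the least index with $\alpha^{(K)}\notin D_2$; telescoping gives $X^\alpha-c^KX^{\alpha^{(K)}}\in R\circ F$. If $\alpha^{(K)}\in D_1$ then $X^{\alpha^{(K)}}\in R\circ F$ by the previous case, hence $X^\alpha\in R\circ F$; otherwise $\alpha^{(K)}_{m+1}>a_{m+1}$, and $X^\alpha-c^KX^{\alpha^{(K)}}$ itself serves as the basis element. The resulting $L$ vectors in $R\circ F$ are linearly independent in $S$ by coefficient comparison: the coefficient of any $X^\gamma$ with $\gamma_{m+1}\le a_{m+1}$ is a single scalar, since every ``terminal'' monomial $X^{\alpha^{(K)}}$ in the second subcase has $X_{m+1}$-exponent strictly greater than $a_{m+1}$. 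The subtle points are the case analysis at the terminal $\alpha^{(K)}$ and the exponent bookkeeping showing the iteration is valid up to step $K-1$.
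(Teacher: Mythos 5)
Your proof is correct, but for the key inclusion $\operatorname{Ann}_R(F)\subseteq I$ it takes a genuinely different route from the paper. Parts you share with the paper: the telescoping identities giving (1) and $I\subseteq\operatorname{Ann}_R(F)$ are essentially identical to the paper's computations. Where you diverge: the paper proves $\operatorname{Ann}_R(F)\subseteq I$ by a minimal-counterexample argument, taking $f\in\operatorname{Ann}_R(F)\setminus I$ with the fewest terms, using Lemma \ref{lem property in Ann}(2)(3) to locate a chain of terms $g_0,\dots,g_w$ of $f$ forced by the binomial structure, and subtracting an element of $I$ built from $p$ to reduce the number of terms, a contradiction. You instead compare lengths: $R/I\cong A[x_{m+1}]/(\bar p)$ with $A$ the monomial complete intersection $k[x_1,\dots,x_m]/(x_i^{a_i+b_i+1})$ and $\bar p$ monic of degree $a_{m+1}+1$ in $x_{m+1}$, so $\dim_k R/I=(a_{m+1}+1)\prod_i(a_i+b_i+1)$, and then you bound $\dim_k R/\operatorname{Ann}_R(F)=\dim_k(R\circ F)$ from below by the same number via an explicit linearly independent family indexed by the exponent box, built from the orbits $\alpha\mapsto(\alpha_1-b_1,\dots,\alpha_m-b_m,\alpha_{m+1}+b_{m+1})$; the terminal case analysis (landing in $D_1$ versus leaving the box in the $X_{m+1}$-direction) and the observation that all ``tails'' have $X_{m+1}$-exponent exceeding $a_{m+1}$ make the independence argument work, and the inclusion plus equality of finite lengths forces $I=\operatorname{Ann}_R(F)$. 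Your route buys more: it yields the socle-degree-free length formula and an explicit $k$-basis of the inverse system $R\circ F$, at the cost of the orbit bookkeeping; the paper's route stays entirely inside ideal membership and reuses its Lemma \ref{lem property in Ann} machinery, in the same style as its non-complete-intersection Propositions. Two small points you leave implicit but which are standard and harmless: the identification $\dim_k R/\operatorname{Ann}_R(F)=\dim_k(R\circ F)$ (the map $f\mapsto f\circ F$ has kernel $\operatorname{Ann}_R(F)$), and your appeal to Lemma \ref{lem property in Ann}(2) for the vanishing $g\circ F_2=0$ when some exponent is below $b_j$, which in your setting is really just the direct computation (the lemma as stated concerns terms of elements of $\operatorname{Ann}_R(F)$), so you should phrase it as such.
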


\begin{proof}
We may assume that $b_1\ge 1$.

First, we prove (1).  
Since 
$x_j^{a_j+1}(x_1^{b_1}\cdots x_m^{b_m})\in I$ and
\[
x_j^{a_j+1}x_{m+1}^{a_{m+1}+1}=x_j^{a_j+1}p-x_j^{a_j+1}\sum_{i=1}^{v}c^i(x_1^{b_1}\cdots x_m^{b_m})^ix_{m+1}^{a_{m+1}+1-ib_{m+1}}
\]
for any $j=1,\dots,m$,
we have $x_j^{a_j+1}x_{m+1}^{a_{m+1}+1}\in I$ for any $j=1,\dots,m$.
 
By the definition of $v$, there exists $i$ such that $1\le i \le m$ and $a_i+1\le vb_i$.
Therefore we have $(x_1^{b_1}\cdots x_m^{b_m})^{v+1}\in I$.
Since 
$(x_1^{b_1}\cdots x_m^{b_m})^{v+1}\in I$ and
\[
x_{m+1}^{a_{m+1}+b_{m+1}+1}=x_{m+1}^{b_{m+1}}p-cx_1^{b_1}\cdots x_m^{b_m}p+c^{v+1}(x_1^{b_1}\cdots x_m^{b_m})^{v+1}x_{m+1}^{a_{m+1}+1-vb_{m+1}},
\]
we have $x_{m+1}^{a_{m+1}+b_{m+1}+1}\in I$.

Next, we prove (2).
Let \[F_1=X_1^{a_1+b_1}\cdots X_m^{a_m+b_m}X_{m+1}^{a_{m+1}} \text{  and  } F_2=X_1^{a_1}\cdots X_m^{a_m}X_{m+1}^{a_{m+1}+b_{m+1}}.\]
Since 
\[(x_1^{b_1}\cdots x_m^{b_m})^{i+1}x_{m+1}^{a_{m+1}+1-(i+1)b_{m+1}}\circ F_1=(x_1^{b_1}\cdots x_m^{b_m})^ix_{m+1}^{a_{m+1}+1-ib_{m+1}}\circ F_2\] 
for any $i=0,\dots, v-1$ and 
\[x_{m+1}^{a_{m+1}+1}\circ F_1=(x_1^{b_1}\cdots x_m^{b_m})^vx_{m+1}^{a_{m+1}+1-vb_{m+1}}\circ F_2=0,\]
we have \[p\in\operatorname{Ann}_R(F).\]
Since $x_i^{a_i+b_i+1}, p\in \operatorname{Ann}_R(F)$ for any $i=1,\dots, m$, we have $I\subset \operatorname{Ann}_R(F)$.
Hence it is enough to show that $I\supset \operatorname{Ann}_R(F)$.
Suppose, for the sake of contradiction, that $I\not\supset \operatorname{Ann}_R(F)$.

For $f=\sum_{(i_1,\dots,i_{m+1})\in \mathbb Z_{\ge 0}^{m+1}}c_{i_1,\dots,i_{m+1}}x_1^{i_1}\cdots x_{m+1}^{i_{m+1}}\in R$,
we define \[N(f):=\#\{(i_1,\dots,i_{m+1})\in\mathbb Z_{\ge 0}^{m+1}  \mid c_{i_1,\dots,i_{m+1}}\neq 0\}.\]
Let $f$ be an element of $\operatorname{Ann}_R(F)\setminus I$ such that $N(f)$ is minimized over all elements of $\operatorname{Ann}_R(F)\setminus I$.
If $s_i\ge a_i+b_i+1$ for some $i$ with $1\le i \le m+1$, then  $x_1^{s_1}\cdots x_{m+1}^{s_{m+1}}\in I$ by (1) and $f+ex_1^{s_1}\cdots x_{m+1}^{s_{m+1}}\in\operatorname{Ann}_R(F)\setminus I$ for any $e\in k$.
Hence, if $dx_1^{s_1}\cdots x_{m+1}^{s_{m+1}}\in f$ with $d\in k\setminus\{0\}$ and $s_i\in \mathbb Z_{\ge 0}$, we have $s_1\le a_1+b_1,\dots, s_{m+1}\le a_{m+1}+b_{m+1}$.
Let
\[t_{m+1}=\max\{s_{m+1}\in \mathbb Z_{\ge 0} \mid dx_1^{s_1}\cdots x_{m+1}^{s_{m+1}}\in f, d\in k\setminus\{0\}\}\]
and let $g_0=ex_1^{t_1}\cdots x_m^{t_m}x_{m+1}^{t_{m+1}}$ be a monomial with $g_0\in f$.
Then $t_i\le a_{i}+b_{i}$ for any $i=1,\dots, m+1$.
We have $g_0\circ F_1=0$ by Lemma \ref{lem property in Ann}(3).
Hence $t_{m+1}\ge a_{m+1}+1$.
Note that $t_{m+1}\ge a_{m+1}+1\ge vb_{m+1}$.
Let $g_j:=c^jex_1^{t_1+jb_1}\cdots x_m^{t_m+jb_m}x_{m+1}^{t_{m+1}-jb_{m+1}}$ for $j=1,\dots, v$.
Let $w=\min\{i\in \mathbb Z_{\ge 0} \mid g_i\circ (x_1^{a_1}\cdots x_m^{a_m})=0\}.$
Then $w \le v$, $g_{w}\circ F_2=0$ and $g_i\circ F_2\neq 0$ for $i=0,\dots, w-1$.
Moreover we have $t_l+wb_l\ge a_l+1$ for some $l$ with $1\le l \le m$.
Hence $g_j\in (x_l^{a_l+b_l+1})\subset I$ for $j=w+1,\dots v$. 
Since  \[
\sum_{i=0}^{w}g_i =ex_1^{t_1}\cdots x_m^{t_m}x_{m+1}^{t_{m+1}-a_{m+1}-1}p-\sum_{i=w+1}^{v}g_i,
\]
we have $\sum_{i=0}^{w}g_i\in I$.
Since $g_0\in f$ and $g_i\circ F_2\neq 0$ for $i=0,\dots, w-1$, we have $g_1,\dots, g_{w}\in f$ by Lemma \ref{lem property in Ann}(2).
Hence $f-\sum_{i=0}^{w}g_i\in  \operatorname{Ann}_R(F)\setminus I$ and $g_0,\dots,g_w\not\in f-\sum_{i=0}^{w}g_i$.
Thus $N(f-\sum_{i=0}^{w}g_i)=N(f)-w-1$, which contradicts $f$ is an element of $\operatorname{Ann}_R(F)\setminus I$ such that $N(f)$ is minimized over all elements of $\operatorname{Ann}_R(F)\setminus I$.
Hence we have $I\supset  \operatorname{Ann}_R(F)$.
\end{proof}

The following lemma is used in Lemma \ref{lem c.i m=n=1 Ann(F)} to determine $\operatorname{Ann}_R(F)$.

\begin{lem}\label{lem equal of c.i ideal}
Let $R=k[x_1,\dots, x_N]$ and $\mathfrak m=(x_1,\dots, x_N)$.
Let $(f_1,\dots,f_N)$ and $(g_1\cdots, g_N)$ be $\mathfrak m$-primary ideals.
Let $A=(a_{ij})\in M_N(R)$ be an $N\times N$ matrix such that $(g_1,\dots,g_N)=(x_1,\dots,x_N)A$, that is $g_j=\sum_{i=1}^Nx_ia_{ij}$ for $j=1,\dots, N$.
Suppose that 
$(g_1\cdots, g_N)\subset (f_1,\dots,f_N)$ and $\operatorname{det} A\not \in (f_1,\dots,f_N)$.
Then $(g_1\cdots, g_N)= (f_1,\dots,f_N)$.
\end{lem}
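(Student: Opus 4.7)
My plan is to combine the classical adjugate identity with the Gorenstein structure of the two quotients. The first step is to apply $A \cdot \operatorname{adj}(A) = \det(A) \cdot I_N$ to the row-vector equation $(g_1,\dots,g_N) = (x_1,\dots,x_N)A$: right-multiplication by $\operatorname{adj}(A)$ yields $(x_1 \det A,\dots, x_N \det A) = (g_1,\dots,g_N)\operatorname{adj}(A)$, so $x_i \det(A) \in (g_1,\dots,g_N)$ for every $i$. In particular $\mathfrak m \det(A) \subset (g_1,\dots,g_N) \subset (f_1,\dots,f_N)$.

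Next I would observe that both $R/(f_1,\dots,f_N)$ and $R/(g_1,\dots,g_N)$ are Artinian Gorenstein local rings. Since the ideals are $\mathfrak m$-primary, each quotient agrees with its localization at $\mathfrak m$; and $R_{\mathfrak m}$ is a regular (hence Cohen--Macaulay) local ring of dimension $N$, so any $\mathfrak m$-primary ideal of $R_{\mathfrak m}$ generated by $N$ elements is generated by a regular sequence. Thus each quotient is a complete intersection in a regular local ring, hence Gorenstein, and has a $1$-dimensional socle over $k$. The hypothesis $\det(A) \notin (f_1,\dots,f_N)$ together with $\mathfrak m \det(A) \subset (f_1,\dots,f_N)$ says that $\overline{\det(A)}$ is a nonzero socle element of $R/(f_1,\dots,f_N)$, and a fortiori of $R/(g_1,\dots,g_N)$; in particular it spans each of the two socles.

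To conclude, I would examine the canonical surjection $\pi \colon R/(g_1,\dots,g_N) \twoheadrightarrow R/(f_1,\dots,f_N)$, whose kernel is $(f_1,\dots,f_N)/(g_1,\dots,g_N)$. If this kernel were nonzero, then being a nonzero Artinian submodule of $R/(g_1,\dots,g_N)$ it would have nonzero socle, and that socle must lie inside the $1$-dimensional socle of $R/(g_1,\dots,g_N)$ spanned by $\overline{\det(A)}$. So the kernel would contain $\overline{\det(A)}$, forcing $\det(A) \in (f_1,\dots,f_N)$, contradicting the hypothesis. Hence $(g_1,\dots,g_N) = (f_1,\dots,f_N)$. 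The only substantive point is the complete intersection / Gorenstein claim for the two quotients; the remainder is a short socle argument.
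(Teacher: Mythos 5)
Your proof is correct, but it takes a genuinely different route from the paper's. The paper's proof is a purely ideal-theoretic computation built on \cite[Corollary 2.3.10]{BH}: writing $(f_1,\dots,f_N)=(x_1,\dots,x_N)B$ and $(g_1,\dots,g_N)=(f_1,\dots,f_N)C$, it invokes that result to get $J:\mathfrak m=J+(\operatorname{det} A)=J+(\operatorname{det}(BC))$, $I:\mathfrak m=I+(\operatorname{det} B)$ and $J:I=J+(\operatorname{det} C)$, and then, assuming $J\neq I$, deduces $\operatorname{det} C\in\mathfrak m$ from the localization at $\mathfrak m$ and reaches the contradiction $\operatorname{det} A\in J+(\operatorname{det} B)(\operatorname{det} C)\subset I$. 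You instead use only the elementary containment coming from the adjugate identity, $\mathfrak m\operatorname{det} A\subset(g_1,\dots,g_N)$, and then exploit the structure of the quotients: both are Artinian complete intersections (the generators form regular sequences in $R_{\mathfrak m}$ since the ideals are $\mathfrak m$-primary and generated by $N$ elements), hence Gorenstein with one-dimensional socle, which by hypothesis is spanned by $\overline{\operatorname{det} A}$ in both $R/J$ and $R/I$ (note $\operatorname{det} A\notin J$ since $J\subset I$); if the kernel $I/J$ of the surjection $R/J\twoheadrightarrow R/I$ were nonzero, its nonzero socle would have to contain $\overline{\operatorname{det} A}$, forcing $\operatorname{det} A\in I$, a contradiction. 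What each approach buys: yours avoids the full socle formula of \cite{BH} and the auxiliary matrices $B,C$, needing only the easy adjugate direction plus standard facts (Gorensteinness of Artinian complete intersections, nonvanishing of the socle of a nonzero finite-length module), so it is more self-contained; the paper's version is shorter once the cited formula is granted and never needs to mention socle dimension explicitly. All the supporting points in your write-up (identification of each quotient with its localization at $\mathfrak m$, the regular-sequence claim, and the socle argument) are correctly justified.
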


\begin{proof}
We put $I=(f_1,\dots,f_N)$ and $J=(g_1,\dots,g_N)$. 
Let $B,C\in M_N(R)$ be $N\times N$ matrices such that $(f_1,\dots,f_N)=(x_1,\dots,x_N)B$ and $(g_1,\dots,g_N)=(f_1,\dots,f_N)C$.
Then  we have $(g_1,\dots,g_N)=(x_1,\dots,x_N)A=(x_1,\dots,x_N)(BC)$.
By \cite[Corollary 2.3.10]{BH}, we have
\[J:\mathfrak m=J+(\operatorname{det} A)=J+(\operatorname{det} (BC)),\
I:\mathfrak m=I+(\operatorname{det} B),\
J:I=J+(\operatorname{det} C).
\]
Hence we obtain
$\operatorname{det} A\in J+(\operatorname{det} (BC))$, $\operatorname{det} B\mathfrak m\subset I$ and 
$\operatorname{det} CI\subset J$.
Suppose, for the sake of contradiction, that $J\neq I$.
This implies that $JR_{\mathfrak m} \neq IR_{\mathfrak m}$.
Indeed, since $I$ and  $J$ are $\mathfrak m$-primary ideals, we have $JR_{\mathfrak n} = IR_{\mathfrak n}$ for any maximal ideal of $R$ with $\mathfrak n\neq \mathfrak m$.
Therefore, we obtain $JR_{\mathfrak m} \neq IR_{\mathfrak m}$ by \cite[Corollary 2.9]{Eis}.
Since 
$\operatorname{det} CIR_{\mathfrak m}\subset JR_{\mathfrak m},$
it follows that $\operatorname{det} C\in\mathfrak m$.
Therefore we have
 \[\operatorname{det} A\in J+(\operatorname{det} (BC))= J+(\operatorname{det} B) (\operatorname{det} C)\subset J+\operatorname{det} B\mathfrak m\subset I,\]
which contradicts the assumption that $\operatorname{det} A\not \in I$.
Hence we conclude that $J=I$.
\end{proof}

\begin{lem}\label{lem c.i m=n=1 Ann(F)}
Let $R=k[x_1,x_2]$ and $S=k[X_1,X_2]$ be  polynomial rings.
Let \[F=X_1^{a_1}X_{2}^{a_{2}}(c_1X_1^{b_1}-c_2X_{2}^{b_{2}})\in S\] 
be a binomial, where $a_1,a_{2},b_1,b_{2}\in \mathbb Z_{\ge 0}$ with $b_1, b_{2}\ge 1$ and $c_1,c_2\in k\setminus\{0\}$.
For $j=1,2$, let \[v_j=\min\{i\in \mathbb N \mid a_j+1\le ib_j\}.\]
Then
\begin{enumerate}
\item If $v_1<v_2$, then $\operatorname{Ann}_R(F)=\left(x_1^{a_1+b_1+1},\sum_{i=0}^{v_1}c_1^{v_1-i}c_2^ix_1^{ib_1}x_2^{a_2+1-ib_2}\right)$.

\item If $v_1>v_2$, then $\operatorname{Ann}_R(F)=\left(x_2^{a_2+b_2+1},\sum_{i=0}^{v_2}c_1^ic_2^{v_2-i}x_1^{a_1+1-ib_1}x_2^{ib_2}\right)$.

\item If $v_1=v_2$, then 
\[\operatorname{Ann}_R(F)=\left(\sum_{i=0}^{v_1}c_1^{v_1-i}c_2^ix_1^{ib_1}x_2^{(v_1-i)b_2},\sum_{i=0}^{v_1-1}c_1^{v_1-1-i}c_2^ix_1^{a_1+1-(v_1-1-i)b_1}x_2^{a_2+1-ib_2}\right).\]
\end{enumerate}

\end{lem}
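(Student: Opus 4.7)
The overall strategy splits along the three cases. Cases (1) and (2) reduce directly to Proposition \ref{prop c.i n=1} by rescaling. In case (1), the hypothesis $v_1<v_2$ gives $v_1 b_2\le(v_2-1)b_2<a_2+1$, so after replacing $F$ by $c_1^{-1}F$ (which preserves $\operatorname{Ann}_R(F)$) we are exactly in the setting of Proposition \ref{prop c.i n=1} with $m=1$, $v=v_1$, and $c=c_2/c_1$. That proposition gives $\operatorname{Ann}_R(F)=(x_1^{a_1+b_1+1},\tilde p)$, and scaling $\tilde p$ by $c_1^{v_1}$ recovers the stated generator. Case (2) follows from case (1) by symmetry, interchanging $(x_1,a_1,b_1,c_1)$ with $(x_2,a_2,b_2,c_2)$.

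For case (3), where $v_1=v_2$, neither asymmetric hypothesis of Proposition \ref{prop c.i n=1} is available and a separate argument is required. First I would verify directly that $p,q\in\operatorname{Ann}_R(F)$. Writing $F=c_1 F_1-c_2 F_2$ and expanding $p\circ F$, each summand of $p$ indexed by $i$ contributes one term $A_i$ (against $F_1$) and one term $B_i$ (against $F_2$). A short exponent check shows $A_i+B_{i-1}=0$ for every $i\in\{1,\dots,v_1\}$, so the full sum telescopes; the two unpaired boundary contributions $A_0$ and $B_{v_1}$ vanish individually because $v_1 b_2\ge a_2+1$ and $v_1 b_1\ge a_1+1$, both valid thanks to $v_1=v_2$. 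An analogous telescoping yields $q\circ F=0$.

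To upgrade $(p,q)\subseteq\operatorname{Ann}_R(F)$ to equality I apply Lemma \ref{lem equal of c.i ideal}. Since $R/\operatorname{Ann}_R(F)$ is Gorenstein of embedding dimension $\le 2$, it is automatically a complete intersection, so $\operatorname{Ann}_R(F)=(f_1,f_2)$. I would split $p=x_1 P_1+x_2 P_2$ with $P_2=c_1^{v_1}x_2^{v_1 b_2-1}$ (and all remaining terms of $p$ absorbed into $P_1$), and $q=x_1 Q_1$ (valid because $(v_1-1)b_1\le a_1$ forces every term of $q$ to be divisible by $x_1$). The associated $2\times 2$ determinant is then $\Delta=-P_2 Q_1$, and among its monomials only the one coming from the $i=v_1-1$ summand of $Q_1$, namely $-c_1^{v_1}c_2^{v_1-1}x_1^{a_1}x_2^{a_2+b_2}$, has $x_2$-degree $\le a_2+b_2$; every other monomial has strictly larger $x_2$-degree and therefore annihilates $F$. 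A direct computation gives $x_1^{a_1}x_2^{a_2+b_2}\circ F=-c_2$, so $\Delta\circ F=c_1^{v_1}c_2^{v_1}\ne 0$ and $\Delta\notin\operatorname{Ann}_R(F)$. Lemma \ref{lem equal of c.i ideal} then yields $(p,q)=\operatorname{Ann}_R(F)$, provided $(p,q)$ is itself $\mathfrak m$-primary; this follows from a standard coprimality argument for $p$ and $q$ in $R$, based on the identity $p=(c_1 x_2^{b_2})\tilde q+(c_2 x_1^{b_1})^{v_1}$ with $\tilde q$ the $k[c_1 x_2^{b_2},c_2 x_1^{b_1}]$-polynomial factor of $q$, combined with $p$ having nonzero restriction to each coordinate axis.

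The main obstacle is the combinatorial bookkeeping in case (3): choosing the splitting of $p$ and $q$ into $x_1,x_2$ columns so the resulting $2\times 2$ determinant has a single surviving monomial under the action on $F$. The witness monomial $x_1^{a_1}x_2^{a_2+b_2}$ is forced by the structure of $F$, since it is the unique pure monomial that sends the second summand of $F$ to the nonzero scalar $-c_2$.
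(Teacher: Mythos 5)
Your proposal is correct and follows essentially the same route as the paper: cases (1)--(2) by rescaling $F$ and invoking Proposition \ref{prop c.i n=1}(2), and case (3) by checking $p,q\in\operatorname{Ann}_R(F)$ via telescoping, using that an embedding-dimension-two Gorenstein algebra is a complete intersection so that $\operatorname{Ann}_R(F)$ has two generators, writing $(p,q)=(x_1,x_2)A$ for an explicit matrix $A$, verifying $\det A\circ F\neq 0$, and applying Lemma \ref{lem equal of c.i ideal}. The only cosmetic differences are that your matrix is the mirror image of the paper's (the paper splits off the pure power $c_2^{v}x_1^{vb_1}$ from $p$ and divides $q$ by $x_2$, so its surviving monomial is $x_1^{a_1+b_1}x_2^{a_2}$ rather than your $x_1^{a_1}x_2^{a_2+b_2}$), and that you establish $\mathfrak m$-primariness of $(p,q)$ by a coprimality argument, whereas the paper exhibits $c_2^{v}x_1^{(v+1)b_1}$ and $c_1^{v}x_2^{(v+1)b_2}$ explicitly as elements of $(p,q)$.
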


\begin{proof}
(1)
Note that $v_j=\min\{i\in \mathbb N \mid x_j^{ib_j}\circ X_j^{a_j}=0\}$ for $j=1,2$.
Since $v_1<v_2$, then $v_1b_2<a_2+1$ by the definition of $v_2$.
Hence we have
\begin{align*}
\operatorname{Ann}_R(F)&=\operatorname{Ann}_R(c_1^{-1}F)\\
&=\left(x_1^{a_1+b_1+1},\sum_{i=0}^{v_1}\left(\frac{c_2}{c_1}\right)^ix_1^{ib_1}x_2^{a_2+1-ib_2}\right)\\
&=\left(x_1^{a_1+b_1+1},\sum_{i=0}^{v_1}c_1^{v_1-i}c_2^ix_1^{ib_1}x_2^{a_2+1-ib_2}\right)
\end{align*}
by Proposition \ref{prop c.i n=1}(2).

(2) We omit the proof of (2) since (2) can be proved in the same manner as (1).

(3)
Since any Gorenstein local ring of embedding dimension at most two is a complete intersection by \cite[Corollary 21.20]{Eis}, $R/\operatorname{Ann}_{R}(F)$ is a complete intersection.
Hence $\operatorname{Ann}_{R}(F)$ is generated by two elements of $R$.

We put $v=v_1$.
Let \[p=\sum_{i=0}^{v}c_1^{v-i}c_2^ix_1^{ib_1}x_2^{(v-i)b_2},\quad q=\sum_{i=0}^{v-1}c_1^{v-1-i}c_2^ix_1^{a_1+1-(v-1-i)b_1}x_2^{a_2+1-ib_2}.\]
Let $F_1=X_1^{a_1+b_1}X_{2}^{a_{2}}$  and  $F_2=X_1^{a_1}X_{2}^{a_{2}+b_{2}}$.
Then $F=c_1F_1-c_2F_2$.
Since 
\[x_1^{(i+1)b_1}x_{2}^{(v-1-i)b_{2}}\circ F_1=x_1^{ib_1}x_{2}^{(v-i)b_{2}}\circ F_2,\]
 \[x_1^{a_1+1-(v-2-j)b_1}x_2^{a_2+1-(j+1)b_2}\circ F_1=x_1^{a_1+1-(v-1-j)b_1}x_2^{a_2+1-jb_2}\circ F_2\] 
for any $i=0,\dots, v-1$, $j=0,\cdots, v-2$ and 
\[x_{2}^{v}\circ F_1=x_1^v\circ F_2=x_{2}^{a_2+1}\circ F_1=x_1^{a_1+1}\circ F_2=0,\]
we have $p,q\in\operatorname{Ann}_R(F).$
Therefore $(p,q)\subset\operatorname{Ann}_R(F).$

Since \[c_2^{v}x_1^{(v+1)b_1}=x_1^{b_1}p-c_1x_1^{vb_1-a_1-1}x_2^{vb_2-a_2-1}q,\] 
\[c_1^vx_2^{(v+1)b_2}=x_2^{b_2}p-c_2x_1^{vb_1-a_1-1}x_2^{vb_2-a_2-1}q,\]
it follows that $(p,q)$ is a $(x_1,x_2)$-primary ideal.
Note that $q\in (x_2)$ because $a_2+1-(v_2-1)b_2>0$ by the definition of $v_2$, and $v=v_1=v_2$.
We have
\[
(p,q)=(x_1,x_2)\begin{pmatrix}
   c_2^{v}x_1^{vb_1-1} & 0 \\
   \dfrac{p-c_2^{v}x_1^{vb_1}}{x_2} & \dfrac{q}{x_2}
\end{pmatrix}
\]

The above $2\times 2$ matrix is denoted by $A$.
Then 
\[\operatorname{det} A=c_2^{v}x_1^{vb_1-1}\dfrac{q}{x_2}=\sum_{i=0}^{v-1}c_1^{v-1-i}c_2^{v+i}x_1^{a_1+(i+1)b_1}x_2^{a_2-ib_2}.\]
Since $x_1^{a_1+(i+1)b_1}\circ F=0$ for $i=1,\dots, v-1$,
we have 
\[
\operatorname{det} A\circ F=c_1^{v-1}c_2^{v}x_1^{a_1+b_1}x_2^{a_2}\circ F=c_1^{v-1}c_2^{v}\neq 0.
\]
Hence we have $\operatorname{det} A \not \in \operatorname{Ann}_R(F)$.
Since $(p,q)\subset \operatorname{Ann}_R(F)$,
 we conclude that
\[\operatorname{Ann}_R(F)=(p,q)\]
by Lemma \ref{lem equal of c.i ideal},
\end{proof}

\begin{lem}\label{lem c.i add variable}
Let $m,n\in \mathbb N$.
Let $R'=k[x_1,\dots,x_m]$, $S'=k[X_1,\dots,X_m]$, $R=k[x_1,\dots,x_{m+n}]$ and $S=k[X_1,\dots,X_{m+n}]$ be  polynomial rings.
Let \[G\in S',\ F=X_{m+1}^{a_{m+1}}\cdots X_{m+n}^{a_{m+n}}G\in S,\] where $a_{m+1},\dots,a_{m+n}\in \mathbb Z_{\ge 0}$.
Then  \[\operatorname{Ann}_R(F)=\operatorname{Ann}_{R'}(G)R+(x_{m+1}^{a_{m+1}+1},\dots,a_{m+n}^{a_{m+n}+1}).\]
In particular, $R'/\operatorname{Ann}_{R'}(G)$ is a complete intersection if and only if $R/\operatorname{Ann}_{R}(F)$ is a complete intersection.
\end{lem}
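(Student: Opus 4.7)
My plan is to split the proof into two stages: first establish the equality of ideals by proving both inclusions directly from the definition of the contraction action, then derive the complete-intersection equivalence from a count of minimal generators.

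For the inclusion $\operatorname{Ann}_{R'}(G)R + (x_{m+1}^{a_{m+1}+1},\dots,x_{m+n}^{a_{m+n}+1}) \subseteq \operatorname{Ann}_R(F)$, I would observe that any $h \in \operatorname{Ann}_{R'}(G)$ involves only the variables $x_1,\dots,x_m$, so its action on $F=X_{m+1}^{a_{m+1}}\cdots X_{m+n}^{a_{m+n}}G$ factors through $G$, yielding $h\circ F = X_{m+1}^{a_{m+1}}\cdots X_{m+n}^{a_{m+n}}(h\circ G)=0$; and each $x_{m+j}^{a_{m+j}+1}$ visibly annihilates the factor $X_{m+j}^{a_{m+j}}$ of $F$. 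For the reverse inclusion, I would take $f\in \operatorname{Ann}_R(F)$ and expand it uniquely as $f=\sum_{\beta \in \mathbb Z_{\ge 0}^n}g_\beta\prod_{i=1}^n x_{m+i}^{\beta_i}$ with $g_\beta \in R'$. Using the forward inclusion, I may subtract off multiples of the generators $x_{m+j}^{a_{m+j}+1}$ and assume $\beta_i\le a_{m+i}$ for every $\beta$ occurring. Since the action of $g_\beta$ commutes with the action on the $X_{m+i}$-factors,
\[
0=f\circ F = \sum_\beta (g_\beta\circ G)\cdot X_{m+1}^{a_{m+1}-\beta_1}\cdots X_{m+n}^{a_{m+n}-\beta_n},
\]
where each $g_\beta\circ G$ lies in $k[X_1,\dots,X_m]$ and the tails $\prod_i X_{m+i}^{a_{m+i}-\beta_i}$ are distinct monomials in $k[X_{m+1},\dots,X_{m+n}]$. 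Hence every $g_\beta\circ G=0$, i.e.\ $g_\beta \in \operatorname{Ann}_{R'}(G)$, and $f$ lies in the right-hand side.

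For the \emph{in particular} assertion I would use the standard characterization that an $\mathfrak m$-primary ideal $I$ in a polynomial ring $R$ makes $R/I$ a complete intersection exactly when $\mu(I)=\dim R$. Starting from a minimal generating set $f_1,\dots,f_s$ of $\operatorname{Ann}_{R'}(G)$ (necessarily with each $f_i\in \mathfrak m':=(x_1,\dots,x_m)$, since $\operatorname{Ann}_{R'}(G)$ is proper), I claim that $f_1,\dots,f_s,x_{m+1}^{a_{m+1}+1},\dots,x_{m+n}^{a_{m+n}+1}$ is a minimal generating set of $\operatorname{Ann}_R(F)$. To verify minimality, suppose a $k$-linear combination $\sum c_if_i+\sum d_j x_{m+j}^{a_{m+j}+1}$ lies in $\mathfrak m\cdot\operatorname{Ann}_R(F)$; substituting $x_{m+1}=\cdots=x_{m+n}=0$ yields a relation in $\mathfrak m'\operatorname{Ann}_{R'}(G)$, which forces every $c_i=0$ by minimality in $R'$, and then substituting $x_1=\cdots=x_m=0$ (using $f_i\in \mathfrak m'$) reduces the equation to one in $k[x_{m+1},\dots,x_{m+n}]$ whose coefficient at the pure monomial $x_{m+j}^{a_{m+j}+1}$ equals $d_j$, forcing $d_j=0$. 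This gives $\mu(\operatorname{Ann}_R(F))=\mu(\operatorname{Ann}_{R'}(G))+n$, so the condition $\mu(\operatorname{Ann}_R(F))=m+n$ is equivalent to $\mu(\operatorname{Ann}_{R'}(G))=m$, which is the desired equivalence.

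The main obstacle I foresee is the bookkeeping in the minimality step: one must carefully track that the two substitutions project elements of $\mathfrak m$ into the appropriate smaller maximal ideals, so that the error terms coming from $\mathfrak m\cdot\operatorname{Ann}_R(F)$ contribute nothing to the constants $c_i$ and $d_j$. The payoff is that the coefficient of $x_{m+j}^{a_{m+j}+1}$ in the reduced equation is exactly $d_j$, which hinges on the fact that elements of $\mathfrak m$ vanish at the origin.
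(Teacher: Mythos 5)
Your proof of the displayed ideal equality is correct and is essentially the paper's own argument: expand $f$ along the monomials in $x_{m+1},\dots,x_{m+n}$, reduce modulo the pure powers $x_{m+j}^{a_{m+j}+1}$ so that all exponents are at most $a_{m+j}$, and use the fact that the distinct tails $\prod_i X_{m+i}^{a_{m+i}-\beta_i}$ keep the contributions $(g_\beta\circ G)\prod_i X_{m+i}^{a_{m+i}-\beta_i}$ in distinct components of $S=S'\otimes_k k[X_{m+1},\dots,X_{m+n}]$, forcing each $g_\beta\circ G=0$.

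For the \emph{in particular} clause (which the paper treats as immediate and does not argue), your generator count is the right idea, but one step is not justified as written: the inference that $\sum_i c_if_i\in\mathfrak m'\operatorname{Ann}_{R'}(G)$ forces all $c_i=0$ ``by minimality in $R'$''. That inference is Nakayama's lemma, and $R'$ is not local; since $\operatorname{Ann}_{R'}(G)$ need not be homogeneous (the lemma is applied to non-homogeneous $F$ in the paper), neither graded Nakayama nor ordinary Nakayama applies to the ideal inside the polynomial ring itself, and in a general Noetherian ring a generating set of minimal cardinality can indeed have linearly dependent images in $I/\mathfrak m I$ (so ``minimality'' alone is not the reason; for $\mathfrak m$-primary ideals of a polynomial ring the equality $\mu(I)=\dim_k I/\mathfrak m I$ is true but is a nontrivial theorem). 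The same issue touches your characterization ``complete intersection iff $\mu(I)=\dim R$'' in the direction where you must produce few \emph{global} generators from a local count. The clean repair is to run the whole count in the localizations: work with $\operatorname{Ann}_{R'}(G)R'_{\mathfrak m'}$ and $\operatorname{Ann}_R(F)R_{\mathfrak M}$, where $\mu$ equals $\dim_k I/\mathfrak m I$ by genuine Nakayama, take the $f_i$ to be a minimal system of generators of the localized ideal, and use the local characterization of complete intersections ($\mu(I_{\mathfrak M})=\dim R$); your two substitutions $x_{m+1}=\cdots=x_{m+n}=0$ and $x_1=\cdots=x_m=0$, together with $\operatorname{Ann}_{R'}(G)\subseteq\mathfrak m'$ and the absence of pure-power monomials in $(x_{m+1},\dots,x_{m+n})(x_{m+1}^{a_{m+1}+1},\dots,x_{m+n}^{a_{m+n}+1})$, then go through verbatim and give $\mu(\operatorname{Ann}_R(F)_{\mathfrak M})=\mu(\operatorname{Ann}_{R'}(G)_{\mathfrak m'})+n$. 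Note also that the direction ``$R'/\operatorname{Ann}_{R'}(G)$ c.i.\ $\Rightarrow$ $R/\operatorname{Ann}_R(F)$ c.i.'' needs none of this: it is immediate from the ideal equality, since $m$ generators of $\operatorname{Ann}_{R'}(G)$ together with the $n$ pure powers generate $\operatorname{Ann}_R(F)$.
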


\begin{proof}
Let $I=\operatorname{Ann}_{R'}(G)R+(x_{m+1}^{a_{m+1}+1},\dots,a_{m+n}^{a_{m+n}+1})$.
Since $I\subset \operatorname{Ann}_R(F)$, it is enough to show that $I\supset \operatorname{Ann}_R(F)$.
Let $f\in\operatorname{Ann}_R(F)$, and write \[f=\sum_{(i_{m+1},\dots,i_{m+n})\in \mathbb Z_{\ge 0}^n}f_{i_{m+1},\dots,i_{m+n}}x_{m+1}^{i_{m+1}}\cdots x_{m+n}^{i_{m+n}},\] where $f_{i_{m+1},\dots,i_{m+n}}\in R'$.

If $(i_{m+1},\dots, i_{m+n})\neq (i'_{m+1},\dots, i'_{m+n})$ with $i_j,i'_j \le a_j$ for all $j=m+1,\dots, m+n$ and 
\[f_{i_{m+1},\dots,i_{m+n}}x_{m+1}^{i_{m+1}}\cdots x_{m+n}^{i_{m+n}}\circ F\neq 0, \quad f_{i'_{m+1},\dots,i'_{m+n}}x_{m+1}^{i'_{m+1}}\cdots x_{m+n}^{i'_{m+n}}\circ F\neq 0,\]
 then \[f_{i_{m+1},\dots,i_{m+n}}x_{m+1}^{i_{m+1}}\cdots x_{m+n}^{i_{m+n}}\circ F\neq  f_{i'_{m+1},\dots,i'_{m+n}}x_{m+1}^{i'_{m+1}}\cdots x_{m+n}^{i'_{m+n}}\circ F\]
by comparing their degrees.
Therefore, if $i_j\le a_j$ for all $j=m+1,\dots, m+n$, then \[f_{i_{m+1},\dots,i_{m+n}}x_{m+1}^{i_{m+1}}\cdots x_{m+n}^{i_{m+n}}\circ F=0,\] which implies that $f_{i_{m+1},\dots,i_{m+n}}\circ F=0$.
Thus $f_{i_{m+1},\dots,i_{m+n}}\in \operatorname{Ann}_{R'}(G)$ if $i_j\le a_j$ for all $j=m+1,\dots, m+n$.
This concludes that $I\supset \operatorname{Ann}_R(F)$.
\end{proof}

The following is the main theorem of this paper.

\begin{thm}\label{main thm c.i}
Let $m,n\in \mathbb N$.
Let $R=k[x_1,\dots,x_{m+n}]$ and $S=k[X_1,\dots,X_{m+n}]$ be  polynomial rings.
Let \[F=X_1^{a_1}\cdots X_{m+n}^{a_{m+n}}(c_1X_1^{b_1}\cdots X_m^{b_m}-c_2X_{m+1}^{b_{m+1}}\cdots X_{m+n}^{b_{m+n}} )\in S\] 
be a binomial, where $a_1,\dots,a_{m+n},b_1\dots,b_{m+n}\in \mathbb Z_{\ge 0}$ and $c_1,c_2\in k\setminus \{0\}$.
Let 
\[d_1=\#\{i \mid b_i\neq 0,\ i=1,\dots, m\},\quad d_2=\#\{i \mid b_i\neq 0,\ i=m+1,\dots, m+n\}.\] 
Suppose that $d_1\ge d_2\ge 1$, \[b_{d_1+1}=b_{d_1+2}=\cdots=b_{m}=0,\quad b_{m+d_2+1}=b_{m+d_2+2}=\cdots=b_{m+n}=0.\]
Let \[v=\min\{i\in \mathbb N \mid (x_1^{b_1}\cdots x_m^{b_m})^i\circ (X_1^{a_1}\cdots X_m^{a_m})=0\}.\]
Then 
\begin{enumerate}
\item $R/\operatorname{Ann}_R(F)$ is a complete intersection if and only if one of the following conditions holds:

\begin{enumerate}
\item $d_1=d_2=1$.
\item $d_1\ge 2$, $d_2=1$ and $a_{m+1}+1\ge vb_{m+1}$.
\end{enumerate}

\item Suppose that $d_1=d_2=1$. Let $w=\min\{i\in \mathbb N \mid a_{m+1}+1\le ib_{m+1}\}$ and $I=(x_2^{a_2+1},\cdots, x_m^{a_m+1},x_{m+2}^{a_{m+2}+1},\cdots, x_{m+n}^{a_{m+n}+1})$.
\begin{enumerate}
\item If $v<w$, then 
\[\operatorname{Ann}_R(F)=(x_1^{a_1+b_1+1},\sum_{i=0}^{v}c_1^{v-i}c_2^ix_1^{ib_1}x_{m+1}^{a_{m+1}+1-ib_{m+1}})+I.\]

\item If $v>w$, then 
\[\operatorname{Ann}_R(F)=(x_{m+1}^{a_{m+1}+b_{m+1}+1},\sum_{i=0}^{w}c_1^{i}c_2^{w-i}x_1^{a_1+1-ib_1}x_{m+1}^{ib_{m+1}})+I.\]

\item If $v=w$, then $\operatorname{Ann}_R(F)=(p,q)+I,$
where \[p=\sum_{i=0}^{v}c_1^{v-i}c_2^ix_1^{ib_1}x_{m+1}^{(v-i)b_{m+1}},\quad q=\sum_{i=0}^{v-1}c_1^{v-1-i}c_2^ix_1^{a_1+1-(v-1-i)b_1}x_{m+1}^{a_{m+1}+1-ib_{m+1}}.\]
\end{enumerate}

\item Suppose that $d_2=1$ and $a_{m+1}+1\ge vb_{m+1}$. 
Then
\[
\operatorname{Ann}_R(F)=(x_1^{a_1+b_1+1}, \dots, x_m^{a_m+b_m+1}, p,x_{m+2}^{a_{m+2}+1},\cdots, x_{m+n}^{a_{m+n}+1}),
\]
where $p=\sum_{i=0}^{v}c_1^{v-i}c_2^i(x_1^{b_1}\cdots x_m^{b_m})^ix_{m+1}^{a_{m+1}+1-ib_{m+1}}$.
\end{enumerate}

\end{thm}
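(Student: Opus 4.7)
The strategy is to deduce Theorem \ref{main thm c.i} from the propositions already established by performing two elementary reductions. First, since $\operatorname{Ann}_R(F)=\operatorname{Ann}_R(c_1^{-1}F)$, I replace $F$ by $c_1^{-1}F$ so the binomial takes the form $X_1^{a_1}\cdots X_{m+n}^{a_{m+n}}(X_1^{b_1}\cdots X_m^{b_m}-c\,X_{m+1}^{b_{m+1}}\cdots X_{m+n}^{b_{m+n}})$ with $c=c_2/c_1$, matching the shape used in Propositions \ref{prop not c.i n>1}, \ref{prop not c.i n=1}, \ref{prop c.i n=1} and Lemma \ref{lem c.i m=n=1 Ann(F)}. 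Second, because $b_j=0$ for $j\in\{d_1+1,\dots,m\}\cup\{m+d_2+1,\dots,m+n\}$, the binomial $F$ factors as the monomial $X_{d_1+1}^{a_{d_1+1}}\cdots X_m^{a_m}X_{m+d_2+1}^{a_{m+d_2+1}}\cdots X_{m+n}^{a_{m+n}}$ times a binomial $G$ in the reduced variable set $\{X_1,\dots,X_{d_1},X_{m+1},\dots,X_{m+d_2}\}$. After relabeling variables, Lemma \ref{lem c.i add variable} reduces the complete intersection question and the determination of $\operatorname{Ann}_R(F)$ to the analogous questions for $\operatorname{Ann}_{R'}(G)$, while contributing the pure-power generators $x_j^{a_j+1}$ for the inert indices $j$ to the resulting ideal.

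For the ``only if'' direction of part (1) I argue by contrapositive. The failure of both (a) and (b) splits into two subcases: either $d_2\ge 2$ (and hence $d_1\ge d_2\ge 2$), or $d_1\ge 2$, $d_2=1$ and $a_{m+1}+1<vb_{m+1}$. Applied to $G$, these are precisely the hypotheses of Proposition \ref{prop not c.i n>1} and Proposition \ref{prop not c.i n=1} respectively, so in each case $R'/\operatorname{Ann}_{R'}(G)$ fails to be a complete intersection, and hence so does $R/\operatorname{Ann}_R(F)$ by Lemma \ref{lem c.i add variable}. The ``if'' direction will follow as a byproduct of parts (2) and (3): case (b) is Proposition \ref{prop c.i n=1} applied to $G$, and case (a) with $d_1=d_2=1$ has $R'=k[x_1,x_{m+1}]$, so $\operatorname{Ann}_{R'}(G)$ is automatically a complete intersection, as made explicit by Lemma \ref{lem c.i m=n=1 Ann(F)}.

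For part (2), after reduction to $R'=k[x_1,x_{m+1}]$ I identify the numbers $v$ and $w$ of the theorem with the $v_1$ and $v_2$ of Lemma \ref{lem c.i m=n=1 Ann(F)} (with $x_1,x_{m+1}$ here playing the role of $x_1,x_2$ there), so its three cases translate directly into the presentations (a), (b), (c); adjoining the pure-power generators coming from Lemma \ref{lem c.i add variable} produces exactly the ideal of the form $(\,\cdot\,,\,\cdot\,)+I$ of the theorem. For part (3), I apply Proposition \ref{prop c.i n=1} to $G$ with $c=c_2/c_1$; the generator it provides is $\sum_{i=0}^{v}(c_2/c_1)^i(x_1^{b_1}\cdots x_{d_1}^{b_{d_1}})^i x_{m+1}^{a_{m+1}+1-ib_{m+1}}$, and multiplying by the unit $c_1^v$ yields the symmetric form $p=\sum_{i=0}^{v}c_1^{v-i}c_2^i(x_1^{b_1}\cdots x_m^{b_m})^ix_{m+1}^{a_{m+1}+1-ib_{m+1}}$ of the statement; the generators $x_j^{a_j+b_j+1}$ with $j\le d_1$ come from Proposition \ref{prop c.i n=1}, and for $j>d_1$ they collapse to $x_j^{a_j+1}$ (since $b_j=0$) and are supplied by Lemma \ref{lem c.i add variable}.

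There is no conceptual obstacle beyond bookkeeping, since the substance of the argument is already encoded in Propositions \ref{prop not c.i n>1}, \ref{prop not c.i n=1}, \ref{prop c.i n=1} and Lemma \ref{lem c.i m=n=1 Ann(F)}. The only care required is to verify that the two reductions (scaling by $c_1^{-1}$ and stripping inert variables via Lemma \ref{lem c.i add variable}) are compatible with the three subcases of part (2) and with the identification $x_j^{a_j+b_j+1}=x_j^{a_j+1}$ for $j>d_1$ in part (3), so that the generators listed in the theorem agree on the nose with those produced by the cited results.
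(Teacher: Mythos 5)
Your proposal is correct and follows essentially the same route as the paper: scale by $c_1^{-1}$, reduce via Lemma \ref{lem c.i add variable}, and invoke Propositions \ref{prop not c.i n>1}, \ref{prop not c.i n=1}, \ref{prop c.i n=1} and Lemma \ref{lem c.i m=n=1 Ann(F)} case by case. The only (harmless) difference is that you strip \emph{all} variables with $b_j=0$ before applying the propositions, whereas the paper keeps $X_{d_1+1},\dots,X_m$ inside $G$ (the propositions tolerate zero exponents in the first block) and only strips $X_{m+2},\dots,X_{m+n}$; in the case $d_2\ge 2$ it applies Proposition \ref{prop not c.i n>1} to $F$ directly.
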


\begin{proof}
Note that $b_1>0$ and $b_{m+1}>0$ by the assuption that $d_1\ge d_2\ge 1$, \[b_{d_1+1}=b_{d_1+2}=\cdots=b_{m}=0,\quad b_{m+d_2+1}=b_{m+d_2+2}=\cdots=b_{m+n}=0.\]

If $d_2\ge 2$, then $R/\operatorname{Ann}_R(F)=R/\operatorname{Ann}_R(c_1^{-1}F)$ is not a complete intersection by applying Proposition \ref{prop not c.i n>1} to $c_1^{-1}F$.

If $d_1=d_2=1$, then  
$
F=X_1^{a_1}\cdots X_{m+n}^{a_{m+n}}(c_1X_1^{b_1}-c_2X_{m+1}^{b_{m+1}})
$
and
$v=\min\{i\in \mathbb N \mid a_1+1\le ib_1\}$.
Let $F'=X_1^{a_1}X_{m+1}^{a_{m+1}}(c_1X_1^{b_1}-c_2X_{m+1}^{b_{m+1}}).$
Then we have \[F=X_{2}^{a_{2}}\cdots X_{m}^{a_{m}}X_{m+2}^{a_{m+2}}\cdots X_{m+n}^{a_{m+n}}F'.\]
Therefore $R/\operatorname{Ann}_R(F)$ is a complete intersection and (2) holds  by applying Lemma \ref{lem c.i m=n=1 Ann(F)} to $F'$ and Lemma \ref{lem c.i add variable}. 

We assume that $d_1\ge 2$ and $d_2=1$.
Since $d_2=1$, we have 
\[b_{m+2}=b_{m+3}=\cdots=b_{m+n}=0.\]
Thus,
$
F=X_1^{a_1}\cdots X_{m+n}^{a_{m+n}}(c_1X_1^{b_1}\cdots X_{m}^{b_{m}}-c_2X_{m+1}^{b_{m+1}}).
$
Let $R'=k[x_1,\dots,x_{m+1}]$, $S'=k[X_1,\dots,X_{m+1}]$ be  polynomial rings and \[G=X_1^{a_1}\cdots X_{m+1}^{a_{m+1}}(c_1X_1^{b_1}\cdots X_{m}^{b_{m}}-c_2X_{m+1}^{b_{m+1}})\in S'.\]
Then $F=X_{m+2}^{a_{m+2}}\cdots X_{m+n}^{a_{m+n}}G$.
Note that 
$\operatorname{Ann}_R(F)=\operatorname{Ann}_R(c_1^{-1}F)$ and $\operatorname{Ann}_{R'}(G)=\operatorname{Ann}_{R'}(c_1^{-1}G)$.
By applying Proposition \ref{prop not c.i n=1} and Proposition \ref{prop c.i n=1}(2) to $c_1^{-1}G$, and Lemma \ref{lem c.i add variable}, we conclude that $R/\operatorname{Ann}_R(F)$ is a complete intersection if and only if $R'/\operatorname{Ann}_{R'}(G)$ is a complete intersection, which is equivalent to $a_{m+1}+1\ge vb_{m+1}$.
This completes the proof of (1).

To show (3), assume that $d_2=1$ and $a_{m+1}+1\ge vb_{m+1}$. 
Let \[p'=\sum_{i=0}^{v}\left(\frac{c_2}{c_1}\right)^i(x_1^{b_1}\cdots x_m^{b_m})^ix_{m+1}^{a_{m+1}+1-ib_{m+1}}\in R.\]
Then
\begin{align*}
\operatorname{Ann}_R(F)&=\operatorname{Ann}_R(c_1^{-1}F)\\
&=(x_1^{a_1+b_1+1}, \dots, x_m^{a_m+b_m+1}, p',x_{m+2}^{a_{m+2}+1},\cdots, x_{m+n}^{a_{m+n}+1})\\
&=(x_1^{a_1+b_1+1}, \dots, x_m^{a_m+b_m+1}, p,x_{m+2}^{a_{m+2}+1},\cdots, x_{m+n}^{a_{m+n}+1})
\end{align*}
by Proposition \ref{prop c.i n=1}(2) and Lemma \ref{lem c.i add variable}.
\end{proof}

\section{the strong Lefschetz property}
In this section, we prove that $R/\operatorname{Ann}_R(F)$ has the strong Lefschetz property for a homogeneous binomial $F$ if $\operatorname{char}k=0$ and $R/\operatorname{Ann}_R(F)$ is a complete intersection.

\begin{defn}
Let $A$ be a graded Artinian algebra over $k$.
$A$ has the strong Lefschetz property if there exists $z\in A_1$ such that  the multiplication map $\times z^d: A_i \to A_{i+d}$ has maximal rank for any $i,d\in \mathbb Z_{\ge 0}$.
\end{defn}

\begin{prop}
Let $R=k[x_1,\dots,x_N]$ and $S=k[X_1,\dots,X_N]$ be  polynomial rings.
Let $F\in S$ be a nonzero homogeneous binomial.
Suppose that $\operatorname{char} k=0$ and $R/\operatorname{Ann}_R(F)$ is a complete intersection.
Then $R/\operatorname{Ann}_R(F)$ has the strong Lefschetz property.
\end{prop}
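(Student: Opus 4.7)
The plan is a reduction-and-deformation argument. By Theorem \ref{main thm c.i}, the complete intersection hypothesis forces either (a) $d_1=d_2=1$, or (b) $d_1\ge 2$, $d_2=1$ and $a_{m+1}+1\ge v b_{m+1}$ (after possibly relabelling the two sides of the binomial). In both cases Lemma \ref{lem c.i add variable} lets me peel off the variables $x_j$ with $b_j=0$, so that
\[
R/\operatorname{Ann}_R(F)\;\cong\; R''/\operatorname{Ann}_{R''}(G)\;\otimes_k\; \bigotimes_{j:\,b_j=0} k[x_j]/(x_j^{a_j+1}),
\]
where $R''$ is the polynomial ring in the surviving variables. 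The tensor factors on the right are monomial complete intersections, hence have SLP by \cite{S},\cite{W85}, and SLP is preserved under tensor products of graded Artinian $k$-algebras when $\operatorname{char}k=0$ (Watanabe); so it suffices to prove SLP for the core algebra $R''/\operatorname{Ann}_{R''}(G)$. In case (a) the core has embedding dimension $2$ and is covered by \cite{HMNW}, so the substantive case is (b).

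In case (b), after relabelling, Theorem \ref{main thm c.i}(3) presents the core ideal in $R''=k[x_1,\dots,x_{d_1},x_{m+1}]$ as $J=(x_1^{a_1+b_1+1},\dots,x_{d_1}^{a_{d_1}+b_{d_1}+1},p)$, with $p=\sum_{i=0}^{v}c_1^{v-i}c_2^{\,i}(x_1^{b_1}\cdots x_{d_1}^{b_{d_1}})^i x_{m+1}^{a_{m+1}+1-ib_{m+1}}$; the homogeneity of $F$ forces $b_1+\cdots+b_{d_1}=b_{m+1}$, so $p$ is homogeneous of degree $a_{m+1}+1$. I deform by replacing $c_2$ with a parameter $s\in\mathbb{A}^1$, obtaining $p_s$ and $J_s$. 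Modulo the pure-power regular sequence $x_1^{a_1+b_1+1},\dots,x_{d_1}^{a_{d_1}+b_{d_1}+1}$ the reduced quotient is $k[x_{m+1}]$, in which the image of $p_s$ equals the non-zero-divisor $c_1^v x_{m+1}^{a_{m+1}+1}$; hence $p_s$ itself is a non-zero-divisor for every $s\in k$, so each $J_s$ is a complete intersection with generators of fixed degrees and $R''/J_s$ has Hilbert function independent of $s$.

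Now pass to $\bar k$. At $s=0$ the algebra $\bar k\otimes_k R''/J_0$ is a monomial complete intersection and has SLP. Since SLP is a Zariski-open condition in any flat family of graded Artinian algebras with constant Hilbert function (the locus where the rank of some $\times z^d$ drops is closed), the SLP locus in $\mathbb{A}^1_{\bar k}$ is open, nonempty, and hence cofinite. On the other hand, the automorphism $x_{m+1}\mapsto \lambda x_{m+1}$ of $R''\otimes_k\bar k$ sends $p_s$ to $\lambda^{a_{m+1}+1}p_{s\lambda^{-b_{m+1}}}$, inducing isomorphisms $R''/J_s\cong R''/J_{s\lambda^{-b_{m+1}}}$; ranging $\lambda$ over $\bar k^*$ identifies all $\bar k\otimes_k R''/J_s$ with $s\ne 0$, so by cofiniteness each one has SLP, in particular at $s=c_2$. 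Finally, since $k$ has characteristic $0$ it is infinite, so the Zariski-open locus of strong Lefschetz elements in $(R''/J)_1$, being nonempty over $\bar k$, has a $k$-point, and $R''/J$ has SLP over $k$. The main obstacle is the openness of SLP in a flat family with constant Hilbert function, which is standard but needs to be invoked carefully; everything else is bookkeeping.
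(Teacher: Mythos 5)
Your argument is essentially correct, but it follows a genuinely different route from the paper's. The paper makes the same first reduction (Theorem \ref{main thm c.i} together with homogeneity, which forces $b_{m+1}=\sum_i b_i$), and then simply observes that in both cases (2) and (3) the ideal $\operatorname{Ann}_R(F)$ is generated by at most two homogeneous forms together with powers of the variables, so the SLP follows in one stroke from \cite[Proposition 4.25.3]{HMMNWW}; the whole proof is a citation. You instead re-prove the needed Lefschetz statement: the $d_1=d_2=1$ core is handled by \cite{HMNW}, the variables with $b_j=0$ are split off using closure of the SLP under tensor products in characteristic $0$ (legitimate here, since all factors are complete intersections and hence have symmetric Hilbert functions), and the case $d_1\ge 2$, $d_2=1$ is treated by specializing $c_2$ to $0$, degenerating to a monomial complete intersection, and using constancy of the Hilbert function, openness of the SLP locus over $\bar k$, and the torus action rescaling $x_{m+1}$ (which sends $p_s$ to $\lambda^{a_{m+1}+1}p_{s\lambda^{-b_{m+1}}}$) to identify all fibres with $s\neq 0$; finally you descend from $\bar k$ to $k$ using that $k$ is infinite. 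What your approach buys is self-containedness: it rests only on the monomial case \cite{S}, \cite{W85}, the codimension-two case \cite{HMNW}, and semicontinuity, whereas the paper's proof buys brevity at the cost of invoking a ready-made result. Two steps deserve a bit more care than you give them, though neither is a real gap: (i) the non-zero-divisor claim for $p_s$ should not be checked only in the reduced quotient --- either use that $k[x_1,\dots,x_{d_1},x_{m+1}]/(x_1^{a_1+b_1+1},\dots,x_{d_1}^{a_{d_1}+b_{d_1}+1})$ is Cohen--Macaulay (so has no embedded primes), or, more simply, note that $p_s\equiv c_1^{v}x_{m+1}^{a_{m+1}+1}$ modulo $(x_1,\dots,x_{d_1})$, so $R''/J_s$ is Artinian and the $d_1+1$ generators automatically form a regular sequence; (ii) openness of the SLP locus needs either properness of the projection from the incidence locus in $\mathbb P((R''/J)_1)\times\mathbb A^1$, or, more economically, a single Lefschetz element $z_0$ of the monomial fibre at $s=0$: semicontinuity of the ranks of the finitely many maps $\times z_0^{d}$, whose matrices have entries polynomial in $s$, already gives the SLP on a neighbourhood of $s=0$, which is all that your cofiniteness-plus-rescaling argument requires.
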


\begin{proof}
By Theorem \ref{main thm c.i}(1), it is enough to prove that $R/\operatorname{Ann}_R(F)$ has the strong Lefschetz property for $\operatorname{Ann}_R(F)$ in Theorem \ref{main thm c.i}(2)(3).
Hence we may assume that 
\[
R=k[x_1,\dots,x_{m+n}],\quad  S=k[X_1,\dots,X_{m+n}]
\]
\[F=X_1^{a_1}\cdots X_{m+n}^{a_{m+n}}(c_1X_1^{b_1}\cdots X_m^{b_m}-c_2X_{m+1}^{b_{m+1}})\in S,\] 
where $a_1,\dots,a_{m+n},b_1\dots,b_{m+1}\in \mathbb Z_{\ge 0}$ with $b_{m+1}\ge 1$ and $c_1,c_2\in k\setminus\{0\}$.
Since $F$ is homogeneous, we have $b_{m+1}=\sum_{i=1}^mb_i$.
Therefore $\operatorname{Ann}_R(F)$ is generated by  at most two homogeneous elements and several elements of the form $x_i^j$ by Theorem \ref{main thm c.i}(2)(3).
Hence $R/\operatorname{Ann}_R(F)$ has the strong Lefschetz property by \cite[Propsition 4.25.3]{HMMNWW}.
\end{proof}



\begin{thebibliography}{99}
\bibitem{ADFMMSV} Nasrin Altafi, Rodica Dinu, Sara Faridi, Shreedevi K. Masuti, Rosa M. Mir\'{o}-Roig, Alexandra Seceleanu, Nelly Villamizar, {\em Artinian Gorenstein algebras with binomial Macaulay dual generator}, arXiv:2502.18149

\bibitem{ADMMSV} Nasrin Altafi, Rodica Dinu, Shreedevi K. Masuti, Rosa M. Mir\'{o}-Roig, Alexandra Seceleanu, Nelly Villamizar, {\em New families of Artinian Gorenstein algebras with the weak Lefschetz property}, arXiv:2502.16687

\bibitem{BH} W. Bruns and J. Herzog, {\it Cohen-Macaulay rings}, Cambridge Studies in Advanced Mathematics, 39, Cambridge Univ. Press, Cambridge, 1993

\bibitem{Eis} D. Eisenbud, {\it Commutative algebra}, Graduate Texts in Mathematics, 150, Springer, New York, 1995

\bibitem{E} Joan Elias, {\em Towards a characterization of the inverse systems of complete intersections}, arXiv:2405.20049

\bibitem{HMMNWW} T. Harima, T. Maeno, H. Morita, Y. Numata, A. Wachi and J. Watanabe, {\it The Lefschetz properties}, Lecture Notes in Mathematics, 2080, Springer, Heidelberg, 2013

\bibitem{HMNW} T. Harima, J. Migliore, U. Nagel, and J. Watanabe, {\em The weak and strong Lefschetz properties for Artinian $K$-algebras}, J. Algebra {\bf 262} (2003), no.~1, 99--126

\bibitem{HW} T. Maeno and J. Watanabe, {\em Lefschetz elements of Artinian Gorenstein algebras and Hessians of homogeneous polynomials}, Illinois J. Math. {\bf 53} (2009), no.~2, 591--603

\bibitem{HWW} T. Harima, A. Wachi and J. Watanabe, {\em A characterization of the Macaulay dual generators for quadratic complete intersections}, Illinois J. Math. {\bf 61} (2017), no.~3-4, 371--383

\bibitem{IK} A.~A. Iarrobino and V. Kanev, {\it Power sums, Gorenstein algebras, and determinantal loci}, Lecture Notes in Mathematics, 1721, Springer, Berlin, 1999

\bibitem{RRR} L. Reid, L.~G. Roberts and M. Roitman,  {\em On complete intersections and their Hilbert functions}, Canad. Math. Bull. {\bf 34} (1991), no.~4, 525--535

\bibitem{S} R.~P. Stanley, {\em Weyl groups, the hard Lefschetz theorem, and the Sperner property}, SIAM J. Algebraic Discrete Methods {\bf 1} (1980), no.~2, 168--184

\bibitem{W85} J. Watanabe, {\em The Dilworth number of Artinian rings and finite posets with rank function}, in {\it Commutative algebra and combinatorics (Kyoto, 1985)}, 303--312, Adv. Stud. Pure Math., 11, North-Holland, Amsterdam

\bibitem{W} A. Wiebe, {\em The Lefschetz property for componentwise linear ideals and Gotzmann ideals}, Comm. Algebra {\bf 32} (2004), no.~12, 4601--4611

\end{thebibliography}
\end{document}